\documentclass{article}

\usepackage{arxiv}

\usepackage[utf8]{inputenc} 
\usepackage[T1]{fontenc}    
\usepackage{hyperref}       
\usepackage{url}            
\usepackage{booktabs}       
\usepackage{amsfonts}       
\usepackage{nicefrac}       
\usepackage{microtype}      
\usepackage{graphicx}
\usepackage{svg}
\usepackage{amsmath}
\usepackage{amsthm}
\newtheorem{thm}{Theorem}[section]

\newtheorem{prop}[thm]{Proposition}
\newtheorem{cor}[thm]{Corollary}
\theoremstyle{definition}
\newtheorem{exmp}{Example}[section]
\theoremstyle{remark}

\theoremstyle{definition}
\newtheorem{defn}{Definition}[section]

\title{Configurations of higher orders}

\author{
  Benjamin Peet\\
  Department of Mathematics\\
  St. Martin's University\\
  Lacey, WA 98503 \\
  \texttt{bpeet@stmartin.edu} \\
}

\begin{document}
\maketitle

\begin{abstract}
This paper begins by extending the notion of a combinatorial configuration of points and lines to a combinatorial configuration of points and planes that we refer to as configurations of order $2$. We then proceed to investigate a further extension to the notion of points and $k$-planes ($k$-dimensional hyperplanes) which we refer to as configurations of order $k$. We present a number of general examples such as stacked configurations of order $k$ - intuitively layering lower order configurations - and product configurations of order $k$. We discuss many analogues of standard configurations such as dual configurations, isomorphisms, graphical representations, and when a configuration is geometric. We focus mostly on configurations of order $2$ and specifically compute the number of possible symmetric configurations of order $2$ when each plane contains $3$ points for small values on $n$ - the total number of points in the configuration.
\end{abstract}

\keywords{Configuration, configurations of higher orders}

\section{Introduction}

Much has been written in the literature (both research and recreational) about configurations. These are given by the combinatorial definition of:

\begin{defn}
A configuration is a pair of sets of sets $(\mathcal{P},\mathcal{L})$ where $\mathcal{P}=\{p_{1}, \ldots , p_{n}\} $ are called points  and $\mathcal{L}=\{l_{1}, \ldots ,l_{m}\}$ are called lines with each $l_{i}$ is a union of sets from $\mathcal{P}$ and for any pair $p_{i_{1}},p_{i_{2}}$ there is at most one line $l_{j}$ that contains both $p_{i_{1}},p_{i_{2}}$ as subsets. Furthermore, each point is incident with the same number of lines as any other ($s$) and each line is incident with the same number of points as any other ($t$).
\end{defn}

Here it is assumed that both $s,t\geq 2$ and incidence of $p_{i}$ and $l_{j}$ means that $p_{i} \subset l_{j}$. For more details, see the foundational text of Grunbaum \cite{grunbaum2009configurations}.

Note the use of sets as points. This is entirely equivalent, but will become useful when finding representations of configurations of higher orders. For the purposes of this paper, each $p_{i}$ will be taken as distinct singleton sets. However, for future purposes outlined in the final section, it is useful to consider in generality.

Given the terminology of points and lines, we consider the natural progression to points and planes. This does not seem to have appeared in the literature in full generality (for example \cite{glynn2007note} considers only symmetric cases), so we make some explorations here by defining the notion of a configuration of order $2$ and then extending onwards to configurations of order $k$.

We continue some basic concepts from configurations including graphical representations, isomorphisms, and dual spaces. We generate some foundational results about these higher order configurations including some relations. We also do some computations to assess the possible symmetric configurations of order $2$ with $s=3$ along with their automorphism groups for $4\leq n \leq8$.

\section{Definition of a configuration of higher order}

We begin right away with the definition of a configuration of order $2$:

\begin{defn}
A configuration of order $2$ is a pair of sets of sets $(\mathcal{P},\mathcal{E})$ where $\mathcal{P}=\{p_{1}, \ldots , p_{n}\} $ are called points and $\mathcal{E}=\{e_{1}, \ldots ,e_{m}\}$ are called planes with each $e_{i}$ a union of elements of $\mathcal{P}$ and for any $3$ points there is at most one plane $e_{j}$ that has the union of all $3$ as a subset. Furthermore, each point is incident with the same number of planes as any other ($s$) and each plane is incident with the same number of points as any other ($t$).
\end{defn}

Here we assume that now $s,t\geq 3$ and again incidence of a point $p_{i}$ and a plane $e_{j}$ means that $p_{i} \subset e_{j}$. We must also further assume that there is some pair of points that are incident with more than one plane. Otherwise, this would simply be a configuration of order $1$.

We use $\mathcal{E}$ for the set of planes due to the German word \textit{Ebene} for plane.

We can then immediately define the notion of a configuration of order $k$:

\begin{defn}
A configuration of order $k$ is a pair of sets of sets $(\mathcal{P},\mathcal{E})$ where $\mathcal{P}=\{p_{1}, \ldots , p_{n}\} $ are called points and $\mathcal{E}=\{e_{1}, \ldots ,e_{m}\}$ are called $k$-planes with each $e_{i}$ a union of elements of $\mathcal{P}$ and for any $k+1$ points there is at most one plane $e_{j}$ that has the union of all $k+1$ as a subset. Furthermore, each point is incident with the same number of planes as any other ($s$) and each plane is incident with the same number of points as any other ($t$).
\end{defn}

We again here assume that there is some collection of $k$ points that are on more than one $k$-plane and that $s,t\geq k+1$

We say that a configuration of order $k$ is \textit{symmetric} if the number of points and the number of $k$-planes are equal.

Note that this generalizes the standard definition of a configuration. A standard configuration is a configuration of order $1$.

For convenience, we will sometimes make an intuitive exception to $s \geq k+1$ by using the term configuration of order $k$ \textit{without dual} if this is the case. This we will see is useful for explorations and in an intuitive way this seems reasonable. There certainly must be at least $k+1$ \textit{points} per $k$-plane, but a restriction on the number of $k$-\textit{planes} per point is necessary only if we wish to be able to construct the dual space (see later section). Hence configuration of order $k$ \textit{without dual}.

We note here that in \cite{grunbaum2009configurations} the term $k$-configuration is used. These are particular types of configuration (of order $1$) and not to be confused with configurations of order $k$.

\section{Examples and preliminary results}

We begin with an example - the simplest possible configuration of order $2$:

\begin{exmp}
Take four points, that is:

$$\mathcal{P}=\{\{1\},\{2\},\{3\},\{4\}\}$$

and then define the $2$-planes to be:

$$\mathcal{E}=\{\{1,2,3\},\{2,3,4\},\{3,4,1\},\{4,1,2\}\}$$

This defines a configuration of order $2$.

Each plane contains 3 points and each point is on 3 planes.

Note that:
$$4\times3=4\times3$$

We will see later as we begin to represent these configurations that this can be viewed as a tetrahedron.

\end{exmp}

We take this note from above to state in general:

\begin{prop}
Given $(\mathcal{P},\mathcal{E})$, a configuration of order $k$ with $n$ points; $m$ $k$-planes; each point incident with $s$ $k$-planes; and each $k$-plane incident with $t$ points, then the following hold:
\begin{enumerate}
    \item $ns=mt$
    \item $k < n-1$
\end{enumerate}
\end{prop}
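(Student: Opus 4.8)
The plan is to treat the two claims separately, since they rely on different features of the definition. For claim (1), I would use a standard double-counting (flag-counting) argument: count the set of incident pairs $\{(p_i, e_j) : p_i \subset e_j\}$ in two ways. Summing over points, each of the $n$ points lies on exactly $s$ of the $k$-planes, contributing $ns$ incidences; summing over planes, each of the $m$ planes contains exactly $t$ points, contributing $mt$ incidences. Since both expressions enumerate the same finite set, $ns = mt$. This uses nothing beyond the two regularity conditions built into the definition.

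For claim (2), the idea is to exploit the two structural hypotheses that distinguish a genuine configuration of order $k$: that some collection of exactly $k$ points lies on more than one $k$-plane, and that $t \geq k+1$. First I would fix a set $S$ of $k$ points contained in two distinct $k$-planes $e_1$ and $e_2$, which exists by assumption. The crucial step is to bound how much $e_1$ and $e_2$ can overlap using the ``at most one plane through $k+1$ points'' axiom: if $e_1$ and $e_2$ shared any point $p \notin S$, then $S \cup \{p\}$ would be a set of $k+1$ points lying in two distinct planes, contradicting that defining incidence condition. Hence outside of $S$ the two planes are disjoint.

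Then I would simply count points. Since $t \geq k+1$, each of $e_1$ and $e_2$ contains at least one point beyond $S$; choose $a \in e_1 \setminus S$ and $b \in e_2 \setminus S$. By the previous step $a \neq b$, so $S \cup \{a,b\}$ consists of $k+2$ distinct points of $\mathcal{P}$. Therefore $n \geq k+2$, which rearranges to $k < n-1$, as claimed.

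I expect the main obstacle to be not the counting but correctly invoking the uniqueness axiom to force $a \neq b$: the argument hinges on recognizing that a shared point outside $S$ produces $k+1$ common points and hence two planes through them, which is forbidden. The remaining work is routine, and the hypothesis $t \geq k+1$ is precisely what guarantees that each plane extends beyond $S$ at all.
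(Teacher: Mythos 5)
Your proof of (1) is exactly the paper's argument: count the incidence pairs once over points ($ns$) and once over planes ($mt$). For (2), however, you take a genuinely different route. The paper argues by contraposition in one line: if $k \geq n-1$ then $t \geq k+1 \geq n$, ``which cannot be possible by definition'' --- that is, a $k$-plane would have to contain all $n$ points, and this is incompatible with the axioms. You instead argue directly: fix a $k$-set $S$ lying on two distinct planes $e_{1},e_{2}$ (using the paper's standing assumption that such a collection exists), use the uniqueness axiom to show $e_{1}\cap e_{2}$ cannot extend beyond $S$, and then use $t\geq k+1$ to pick $a\in e_{1}\setminus S$ and $b\in e_{2}\setminus S$ with $a\neq b$, giving $n\geq k+2$, i.e.\ $k<n-1$. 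Both arguments are correct, but they buy different things. The paper's is shorter, yet its final step is left to the reader: ruling out $t=n$ actually requires knowing there are at least two planes through some $k$ points (with a single plane, $t=n$ is perfectly consistent), which is precisely the existence assumption you invoke explicitly. Your version is longer but fully self-contained, makes visible where each hypothesis (the uniqueness axiom, $t\geq k+1$, and the doubly-covered $k$-set) enters, and isolates a reusable fact --- two distinct planes meet in at most $k$ points --- that the paper never states.
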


\begin{proof}
For 1. we count the total incidences: each $k$-plane contains $t$ points, hence $mt$ distinct incidences. Each point is a member of $s$ $k$-planes so $ns$ distinct incidences. So necessarily, $ns=mt$.
For 2. we note that if $k \geq n-1$ then $t \geq k+1 \geq n$ which cannot be possible by definition.
\end{proof}

The next proposition gives the following:

\begin{prop}
Given $(\mathcal{P},\mathcal{E})$, a configuration of order $k$ with $n$ points; $m$ $k$-planes; each point incident with $s$ $k$-planes; and each $k$-plane incident with $t$ points, then the following hold:
\begin{enumerate}
    \item $m \geq 1+\frac{t(s-1)}{k}$
    \item $n \geq 1+\frac{s(t-1)}{k}$
\end{enumerate}
\end{prop}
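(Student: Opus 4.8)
The engine for both inequalities is a single intersection bound: any two distinct $k$-planes $e,e'$ satisfy $|e\cap e'|\le k$. Indeed, if they shared $k+1$ points, those $k+1$ points would lie on two distinct planes, contradicting the defining property of a configuration of order $k$. I would record this as a preliminary observation and then run a double count against it.

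For part 1, fix a $k$-plane $e_0$. Each of its $t$ points lies on $s-1$ planes other than $e_0$, so
$$ t(s-1) = \sum_{p\in e_0}\#\{e'\neq e_0 : p\in e'\} = \sum_{e'\neq e_0}|e_0\cap e'|, $$
the second equality being the regrouping of the incidence count by the other plane $e'$. Writing $M$ for the number of planes $e'\neq e_0$ that meet $e_0$, every nonzero term on the right is at most $k$, so $t(s-1)\le kM$, giving $M\ge t(s-1)/k$ and hence $m\ge 1+M\ge 1+t(s-1)/k$. This step is routine once the intersection bound is in hand.

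For part 2 the natural move is to dualize the argument: fix a point $p_0$, note it lies on $s$ planes each carrying $t-1$ further points, and regroup the resulting $s(t-1)$ incidences by the second point $q$, writing $d_q$ for the number of planes through both $p_0$ and $q$ and $N$ for the number of $q\neq p_0$ with $d_q\ge 1$. Then $s(t-1)=\sum_q d_q$, and the bound $n\ge 1+s(t-1)/k$ drops out provided $d_q\le k$ for every $q$ — that is, provided any two points lie on at most $k$ common planes.

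This last provision is exactly where I expect the difficulty to sit, and it is not symmetric to part 1. The intersection bound $|e\cap e'|\le k$ is forced by the axioms, but the dual statement ``any two points lie on at most $k$ planes'' is not of the same character: the defining condition restricts the planes through $k+1$ points, not through a single pair, and the paper explicitly allows $k$ points to lie on several planes. So I would not treat part 2 as a free consequence of the same count. Instead I would either carry the pairwise bound as an added hypothesis, or derive it by passing to the dual configuration (available when $s\ge k+1$) and invoking part 1 there — noting that verifying the dual is itself a configuration of order $k$ reduces to precisely this same pair-of-points bound. Establishing (or assuming) that bound is the crux of part 2.
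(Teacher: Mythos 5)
Your part 1 is correct and is the same argument the paper gives, just written out carefully: the intersection bound $|e\cap e'|\le k$ for distinct planes (forced by the axiom on $k+1$ points), followed by the double count of the $t(s-1)$ incidences between the points of a fixed plane and the other planes. The paper compresses this into two sentences, but it is the identical double count.

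Your suspicion about part 2 is not only justified --- it can be sharpened into a refutation. Part 2 is false as stated, and the paper's own Example 5.1 is a counterexample: the complete configuration of order $2$ on five points (all $\binom{5}{3}=10$ triples as planes) has $k=2$, $n=5$, $m=10$, $s=6$, $t=3$, and part 2 would require $n \ge 1+\frac{6\cdot 2}{2}=7>5$. The failure occurs exactly at the provision you isolated: in that example every pair of points lies on $n-2=3>k$ common planes, so the bound $d_q\le k$ fails (part 1 is unharmed there, since $m=10\ge 1+\frac{3\cdot 5}{2}$). Your fallback via the dual collapses for the same reason, as you yourself predicted: the three planes $\{1,2,3\},\{1,2,4\},\{1,2,5\}$ are three dual points jointly incident with the two dual planes corresponding to the points $1$ and $2$, so the dual of Example 5.1 is not a configuration of order $2$. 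The paper's entire proof of part 2 is the sentence ``A similar argument yields\ldots'', and that similar argument does not exist; the defining axiom constrains the planes through $k+1$ points and is not self-dual. The correct statement is the conditional one you propose: \emph{if} any two points lie on at most $k$ common planes (equivalently, if any $k+1$ planes share at most one point, which is exactly the condition for the dual to be a configuration of order $k$), then your count $s(t-1)=\sum_q d_q \le k(n-1)$ yields $n\ge 1+\frac{s(t-1)}{k}$. Do not try to remove that hypothesis --- it cannot be done.
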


\begin{proof}
Take a $k$-plane. Then each of the $t$ points on this $k$-plane lie on $s-1$ other $k$-planes. However, any $k+1$ points can lie on at most one $k$-plane, hence at a minimum, there are $1+\frac{t(s-1)}{k}$ $k$-planes.

A similar argument yields that there are at a minimum $1+\frac{s(t-1)}{k}$ points.
\end{proof}

\section{Basic concepts for configurations of order $k$}

We here give some basic concepts for configurations of order $k$ that are fairly straightforward extensions from configurations of order $1$.

The first is an isomorphism of configurations of order $k$:

\begin{defn}
Two configurations of order $k$ $(\mathcal{P}_{1},\mathcal{E}_{1})$ and $(\mathcal{P}_{1},\mathcal{E}_{1})$ are isomorphic if there is a bijection $f:\mathcal{P}_{1}\rightarrow \mathcal{P}_{2}$ such that if $e\in \mathcal{E}_{1}$, then $f(e)\in \mathcal{E}_{2}$.
\end{defn}

This leads us to the automorphism group of a configuration of order $k$:

\begin{defn}
The automorphism group of a configuration of order $k$ is the group of all automorphisms. That is, all self-isomorphisms.
\end{defn}

We now consider a dual configuration of configuration $k$:

\begin{defn}
A duality between configurations of order $k$ is an incidence preserving map that sends points to $k$-planes and $k$-planes to points. A configuration of configuration $k$ is self-dual if there is a duality from itself to itself.
\end{defn}

We give here an example of a self-dual configuration of order $2$.

\begin{exmp}
If we again take:
$$\mathcal{P}=\{\{1\},\{2\},\{3\},\{4\}\}$$
$$\mathcal{E}=\{\{1,2,3\},\{2,3,4\},\{1,3,4\},\{1,2,4\}\}$$

Which is the tetrahedron Example 3.1

Then the duality is given by:

$$\{1\}\mapsto \{1,2,3\}$$
$$\{2\} \mapsto \{2,3,4\}$$
$$\{3\} \mapsto \{1,3,4\}$$
$$\{4\} \mapsto \{1,2,4\}$$

To see that this is incidence preserving, note that the duality is exchanging the roles of set containment.
\end{exmp}

We remark here that if we allow $s\leq k$ as discussed earlier, the dual would necessarily have $t\leq k$ which is a clear contradiction. Hence in this case, we use the term configuration of order $k$ without dual.

\section{Geometric/topological representations of configurations of order $2$}

Given our experience from the example above, we now consider ways to visualize and represent these configuration of order $2$.

We noted that Example 3.1 can be viewed as a tetrahedron, and we consider this from an algebraic topology viewpoint as a simplicial complex. See \cite{hatcher2005algebraic}. However, in algebraic topology we would restrict ourselves to only allowing two faces per edge (in the case of $k=2$). There is no restriction here and we need to generalize such a representation.

We hence begin simply by considering configurations of order $2$ with $s=t=3$ and consider an abstract simplicial complex. That is:

\begin{defn}
A collection of non-empty finite subsets of a set $S$ is an abstract simplicial complex if, for every set $X$ in the collection, and every non-empty subset $Y \subset X$, $Y$ also belongs to the collection.
\end{defn}

For more information see \cite{lee2010introduction}.

This is therefore our first possible representation and we define as follows:

\begin{defn}
We say a configuration of order $2$ with $s=t=3$ is realized by an abstract simplicial complex by taking the collection $\mathcal{K}=\{P(e)|e\in\mathcal{E}\}$.
\end{defn}

Here $P(e)$ represents the power set and the dimension of an abstract simplicial complex is one less than the maximal cardinality of any set in the collection. Hence in this case dimension $2$.

Note that a line (simplex of dimension 1) as defined may not be an intersection of planes. However, in this restricted scenario of triangles, the lines will still be the boundary of the plane as defined. With $s=4$ and quadrilaterals, it could be that a line is not the boundary of a plane.

We give the following more complex example:

\begin{exmp}
$$\mathcal{P}=\{\{1\},\{2\},\{3\},\{4\}\,\{5\}\}$$
$$\mathcal{E}=\{\textit{all triples of points}\}$$

We have $n=5$, $m=5C3=10$, $s=6$, and naturally $t=3$. It is clear that this is not a standard simplicial complex, for example any edge between vertices lies on $3$ different faces (planes). 

We can however consider an abstract simplicial complex by $\mathcal{K}=\{\textit{all triples, all pairs, all points}\}$
\end{exmp}

In general, we could define a class of configurations of order $2$ by generalizing the above example to any number of points. Note that the lines along with the points form a complete graph. These could then be termed \textit{complete configurations of order $2$}.

We can then begin to realize these geometrically by embedding the points into Euclidean 3-space and taking the convex hull of the three points as a representation where two planes intersect geometrically if they share at least a point combinatorially.

In the case of the above example, we could embed the points with coordinates: 

$$(0,0,1),(0,0,-1),(-1,0,1),(1,0,1),(0,1,1)$$

It should be quickly observed however, that this definition will not hold if either $s$ or $t$ is greater than $3$ - as by the definition these abstract simplicial complexes would have dimension greater than 2.

We hence introduce an even more generalized notion of an abstract simplicial complex, by recognizing that the issue is that as $s$ and $t$ go beyond $3$ we no longer have triangles or $2$-simplices but instead polygons.

Hence we adapt a more general representation from \cite{mcmullen_schulte_2002}. We first state the original definition, simplifying by considering only rank 3:

\begin{defn}
An abstract polytope $\mathcal {K}$ of rank 3 is a partially ordered (by subset) finite collection of finite sets. Each element is referred to as a face and two faces are referred to as incident if one is a subset of the other (or vice versa). A chain of $\mathcal{K}$ is a totally ordered subset and the length of a chain is $i$ if the chain contains exactly $i+1$ elements. The maximal chains are called flags. For any two faces $F$ and $G$ of $\mathcal{K}$ with $F\subset G$, we call $G/F:=\{H|H\in\mathcal{K}, F\subset H\subset G\}$ a section of $\mathcal{K}$. Each section of $\mathcal{K}$ distinct from $\mathcal{K}$ is called a proper section.

Then $\mathcal{K}$ satisfies the following:

\begin{enumerate}
    \item$\mathcal{K}$ contains a least face and a greatest face; they are denoted by $F_{-1}$ and $F_{3}$.
    \item Each flag has length $4$. 
    \item Any section of $\mathcal{K}$ is connected, that is for any two proper faces $F$ and $G$ in the section there is a finite sequence of proper faces $F=H_{0},H_{1},\ldots,H_{k-1},H_{k}=G$ of $\mathcal{K}$ such that $H_{i-1},H_{i}$ are incident for $i=1,\ldots,k$.
    \item For each $i=0,1,\ldots,3$, if $F$ and $G$ are incident faces of $\mathcal{K}$ of ranks $i-1$ and $i+1$, then there are precisely 2 faces of rank $i$ such that $F<H<G$. 
\end{enumerate}
\end{defn}

We adjust this by not requiring the fourth condition (sometimes known as the diamond condition or homogeneity parameter). This allows more than two planes to meet at a line. We define this as a \textit{generalized abstract polytope}.

Then, given a configuration of order 2 we define:

$$F_{-1}=\emptyset$$
$$F_{3}=p_{1}\cup \ldots \cup p_{n}$$

and then the proper faces are initially:

$$p_{1},\ldots,p_{n}$$
$$e_{1},\ldots,e_{m}$$

That is, the elements of $\mathcal{P}$ and $\mathcal{E}$. These are respectively faces of rank $0$ and rank $2$.

It can be seen that we are missing faces of rank $1$. It remains to define these. We take the sets:

$$e_{i}\cap e_{j}$$

For any pair $e_{i},e_{j}\in\mathcal{E}$. Note that these intersections may be empty, in which $F_{-1}=\emptyset$ would be returned, or may be a point, in which case a face of rank $0$ would be returned.

It is clear now that this satisfies the requirements of Definition 5.3 (minus the fourth condition).

We now state the following:

\begin{defn}
A generalized abstract polytope of rank 3 can be geometrically realized in Euclidean $3$-space if the faces of rank $0$ can be embedded such that given a face of rank $2$ all the elements lie on a plane.
\end{defn}

Given that planes intersect planes in lines all elements of faces of rank $1$ will lie on lines.

We are now in a position to define when a configuration of order $2$ is realizable in Euclidean $3$-space:

\begin{defn}
A configuration of order $2$ is geometrically realizable in Euclidean $3$-space if its' associated generalized abstract polytope can be geometrically embedded in Euclidean $3$-space.
\end{defn}

We note here that for representative purposes a configuration of order $2$ that is not geometrically realizable in Euclidean $3$-space could be topologically represented similar to how the Fano plane is represented in the order $1$ case. We will see such an example in a following section.

We here given an example:

\begin{exmp}

The following is a configuration of order 2 without dual where $n=12$, $m=4$, $s=2$, and $t=6$.

$$\mathcal{P}=\{\{1\},\{2\},\{3\},\{4\},\{5\},\{6\},\{7\},\{8\},\{9\},\{10\},\{11\},\{12\}\}$$
$$\mathcal{E}=\{\{1,2,3,7,8,9\},\{3,4,5,9,10,11\},\{2,5,6,8,11,12\},\{1,4,6,7,10,12\}\}$$

As can be seen in Figure 1 this has been geometrically embedded in Euclidean $3$-space in such a way that the faces are flat.

\end{exmp}

\begin{figure}[!h]
\centering
\includegraphics[height=6cm]{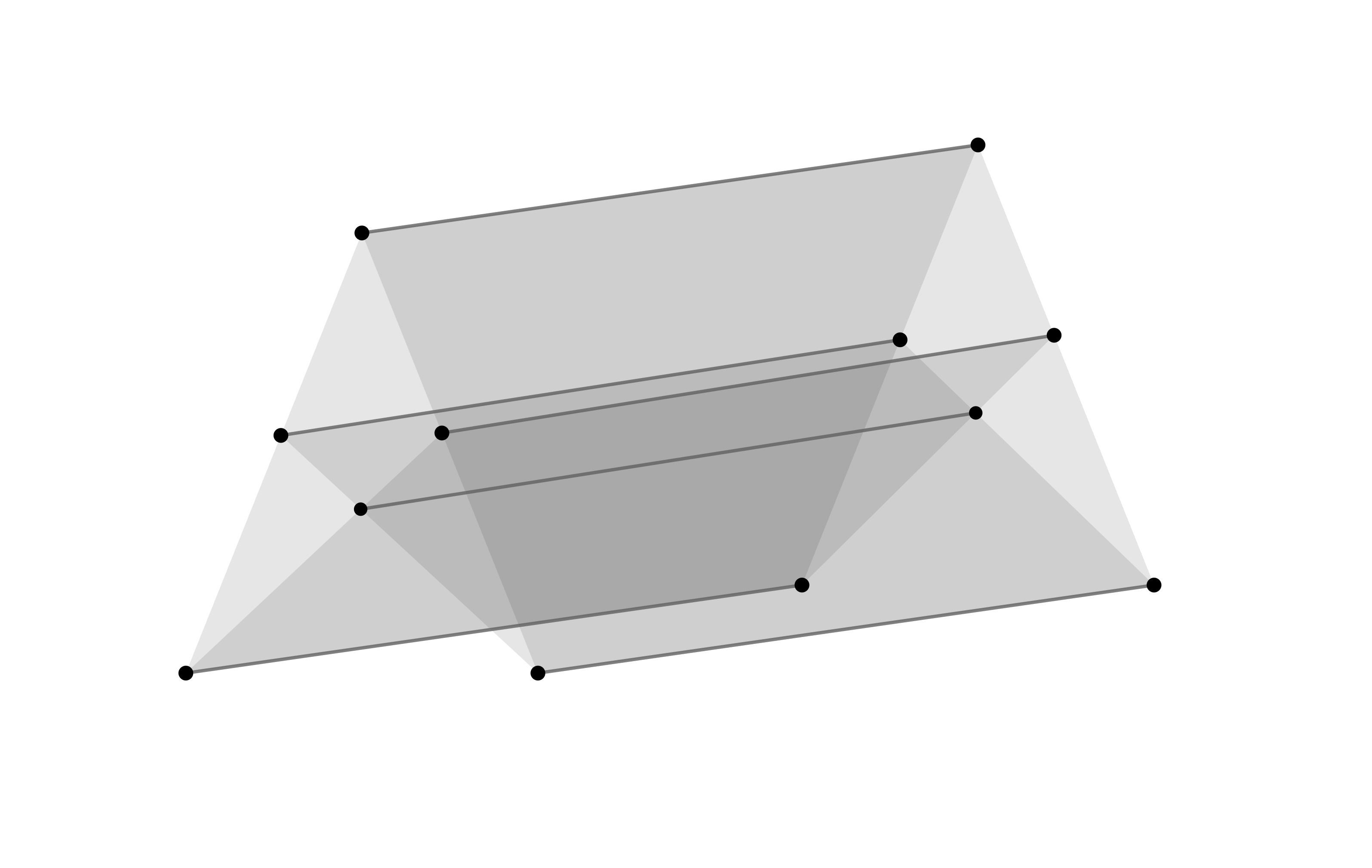}
\caption{Example of geometric embedding}
\end{figure}

Note that the lines shown are the faces of index 2, that is the intersections of the planes. Hence, there are no lines at the "ends".

Also note that two planes intersecting in the geometric realization does not imply that the intersection corresponds to a point. This is similar in the order $1$ case. Simply looking at a geometric realization of Pappus' configuration shows lines intersecting that do not refer to points. See once again \cite{grunbaum2009configurations} and Example 8.1 to see when planes intersect without referring to a point of the configuration of order $2$.

We here ask the question:

Question 1: What is the least number of points and points per plane so that a (symmetric) configuration of order 2 cannot be embedded geometrically?

Note that these methods could certainly be generalized for configurations of orders greater than $2$.

\section{Graphical representations of configurations of order $k$}

To consider a more graphical approach we introduce the following generalization of a Levi graph:

\begin{defn}
A Levi graph of a configuration of order $k$ is a bivalent graph, where points are denoted as black vertices and $k$-planes are denoted as white vertices. A point and a $k$-plane are incident if and only if there is an edge between the corresponding vertices.
\end{defn}

\begin{exmp}
We take the configuration of order $2$ from Example 3.1. As discussed previously, this can be topologically realized as a tetrahedron, or using the Levi Graph.
\end{exmp}

\begin{figure}[!h]
\centering
\includegraphics[height=6cm]{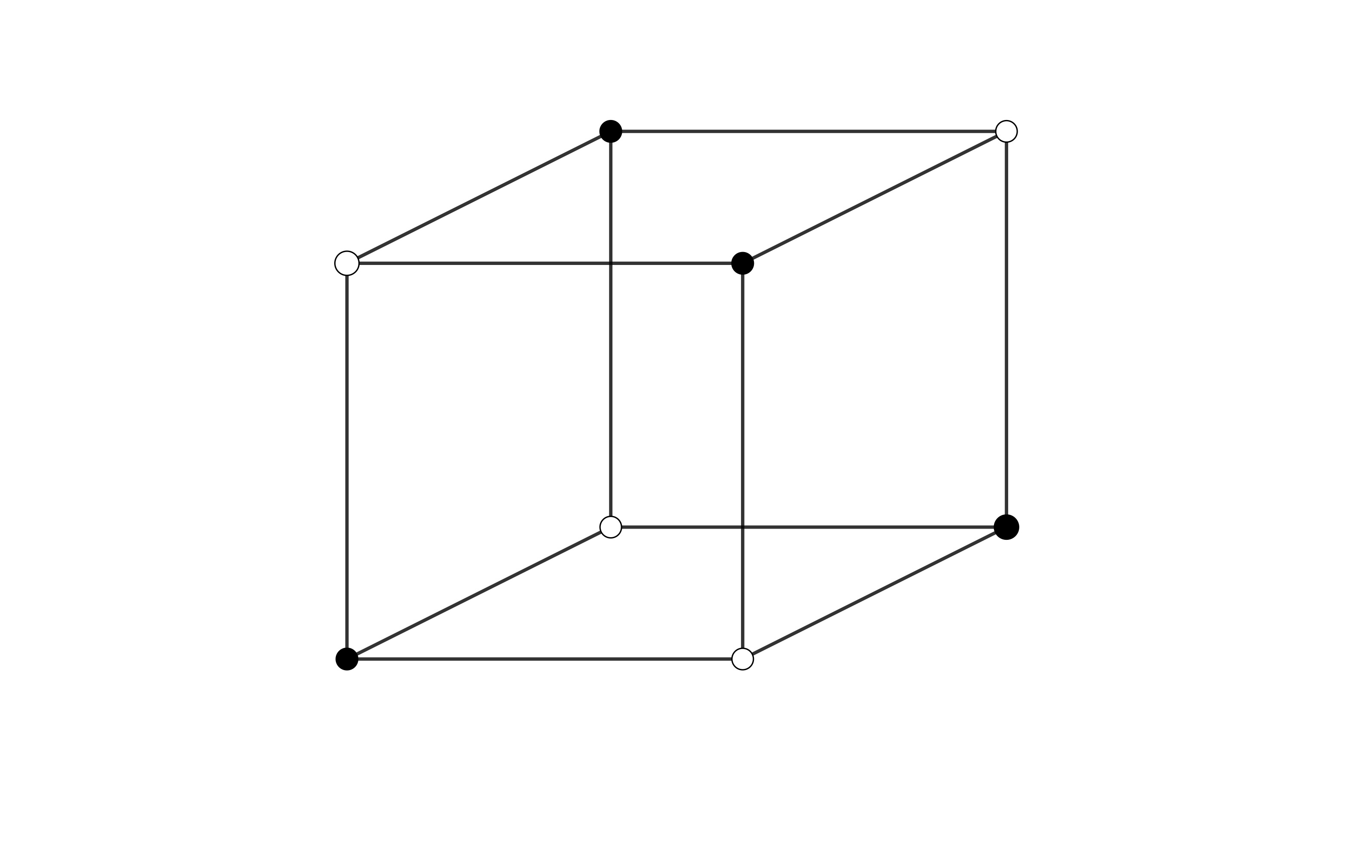}
\caption{Levi graph }
\end{figure}

Note that the Levi graph must satisfy the following:

\begin{prop}
For any $k+1$ black vertices there is at most one white vertex with which they all share an edge.
Each white vertex has valency $t$.
Each black vertex has valency $s$.
\end{prop}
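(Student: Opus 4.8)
The plan is to prove each of the three claims by directly translating the defining properties of a configuration of order $k$ (Definition 2.2) through the dictionary supplied by the Levi graph (Definition 6.1): black vertices are points, white vertices are $k$-planes, and an edge joins a black vertex to a white vertex precisely when the corresponding point is incident with (that is, a subset of) the corresponding $k$-plane. Under this dictionary each of the three assertions becomes a purely combinatorial restatement of one clause of the configuration definition, so the work is to verify that the restatement is faithful.

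For the first claim, I would fix $k+1$ black vertices corresponding to points $p_{i_1}, \ldots, p_{i_{k+1}}$. A white vertex corresponding to a $k$-plane $e_j$ shares an edge with all of them exactly when $p_{i_\ell} \subset e_j$ for every $\ell$. The one step worth spelling out is the elementary set-theoretic equivalence
$$p_{i_1} \subset e_j \text{ and } \cdots \text{ and } p_{i_{k+1}} \subset e_j \iff p_{i_1} \cup \cdots \cup p_{i_{k+1}} \subset e_j,$$
which recasts ``all $k+1$ vertices share an edge with $e_j$'' as ``the union of the $k+1$ points is a subset of $e_j$.'' The defining property of a configuration of order $k$ states that at most one $e_j$ satisfies the right-hand condition, and hence at most one white vertex shares an edge with all $k+1$ chosen black vertices.

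For the second and third claims I would note that the valency of a vertex in the Levi graph counts the edges meeting it, and by the incidence dictionary these edges are in bijection with the incidences recorded for the corresponding object. A white vertex $e_j$ therefore has valency equal to the number of points incident with $e_j$, which is $t$ for every plane by definition; a black vertex $p_i$ has valency equal to the number of $k$-planes incident with $p_i$, which is $s$ for every point by definition.

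There is no substantive obstacle here: the content of the proposition is precisely that the Levi graph losslessly encodes the configuration, so the proof reduces to checking that the encoding preserves all the structure. The only point requiring genuine care is the equivalence displayed above, linking the union-containment condition that appears in the configuration definition to the conjunction of individual incidences that arises naturally in the graph; this is immediate from the definitions of union and subset, and everything else is a direct reading-off of valencies.
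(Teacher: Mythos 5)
Your proof is correct and takes the same approach as the paper, which simply states that the proposition ``follows directly from the definitions.'' You have merely written out the details of that direct verification---including the union-containment equivalence and the incidence-to-edge bijection---which the paper leaves implicit.
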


\begin{proof}
This follows directly from the definitions.
\end{proof}

\section{Constructing configurations of dimension $2$ from configurations of order $1$}

Given that our work here is an extension of configurations of order $1$ we now begin to construct some different classes of configurations of higher order out of configurations of order $1$.

We first consider a class of configurations of order $2$ that we will refer to as stacked configurations.

\subsection{Simple stacked configurations}

\begin{defn}
Given a configuration of order $1$ given by $\mathcal{P}=\{p_{1},\ldots,p_{n}\}$ and $\mathcal{L}=\{l_{1},\ldots,l_{m}\}$ where $l_{i}=\{p_{k(i,1)},\ldots,p_{k(i,t)}\}$ we define a simple stacked configuration of order $2$ by $\mathcal{P^{\prime}}=\{p^{\prime}_{1,1},p^{\prime}_{1,2},\ldots,p^{\prime}_{n,1},p^{\prime}_{n,2}\}$ and $\mathcal{E}=\{e_{1},\ldots,e_{m}\}$ with:

$$e_{i}=\{p'_{k(i,1),1},p'_{k(i,1),2},\ldots,p'_{k(i,t),1},p'_{k(i,t),2}\}$$

\end{defn}

Simply put, we replace each of the points with $2$ points. This has the effect of "stacking" the configuration of order $1$. Note that there is a surjection from $\mathcal{P}'$ to $\mathcal{P}$ given by $p'_{i,j}\mapsto p_{i}$. This can be thought of as a projection of the configuration of order $2$ onto the original configuration of order $1$.

We should here verify that this definition and construction does indeed create a well-defined configuration of order $2$:

\begin{prop}
The above definition is well-defined. Furthermore, if the configuration of order $1$ is of type $(n_{s},m_{t})$, then the simple stacked configuration of order $2$ will be of type $(2n_{s},m_{2t})$.
\end{prop}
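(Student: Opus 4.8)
The plan is to verify the two defining axioms of a configuration of order $2$ directly, using the projection $\pi:\mathcal{P}'\to\mathcal{P}$ given by $\pi(p'_{i,j})=p_i$ as the main organizing tool. First I would record the easy observation that $l_i\mapsto e_i$ is a bijection: since $e_i$ is exactly the set of doubled copies of the points of $l_i$, we have $e_i=e_j$ if and only if $l_i=l_j$, so there are exactly $m$ distinct planes. It then suffices to check (i) that any three points of $\mathcal{P}'$ lie on at most one plane, and (ii) the two regularity conditions, from which the type $\big((2n)_s,m_{2t}\big)$ follows immediately.

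For (i), the key structural fact is that $p'_{u,v}\in e_i$ if and only if $p_u\in l_i$, so any plane containing three given distinct points $p'_{u_1,v_1},p'_{u_2,v_2},p'_{u_3,v_3}$ must correspond to a line containing all three projections $p_{u_1},p_{u_2},p_{u_3}$. The main point — and the step I expect to require the most care — is the observation that, since each original point has only two copies, three distinct points of $\mathcal{P}'$ project to \emph{at most two} distinct points of $\mathcal{P}$, and never to a single point (which would require three copies of one point). Hence their projections always contain two distinct points $p_a\neq p_b$, and any plane through the three original points corresponds to a line through both $p_a$ and $p_b$. By the order-$1$ axiom two distinct points lie on at most one common line, so any two such lines coincide, and therefore so do the corresponding planes. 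This establishes (i).

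For (ii) the counts are direct. Each plane $e_i$ is the doubling of the $t$ points of $l_i$, hence contains exactly $2t$ points. Each point $p'_{i,j}$ lies in $e_l$ precisely when $p_i\in l_l$, i.e. on exactly as many planes as there are lines through $p_i$, which is $s$. With $2n$ points and $m$ planes this yields type $\big((2n)_s,m_{2t}\big)$. Finally I would confirm the remaining hypotheses of the definition: $2t\ge 3$ holds since $t\ge 2$, and the pair $p'_{i,1},p'_{i,2}$ lies on all $s\ge 2$ planes through $p_i$, so some pair of points is incident with more than one plane and the object is genuinely of order $2$ rather than order $1$. If the underlying order-$1$ configuration happens to have $s=2$, the result has $s'=2<3$ and should be read as a configuration of order $2$ \emph{without dual}, in the sense introduced earlier.
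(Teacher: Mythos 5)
Your proof is correct and follows essentially the same route as the paper's: both establish the line--plane bijection, then use the pigeonhole observation that three distinct doubled points cannot all project to a single original point, so their projections contain two distinct points and the order-$1$ axiom forces uniqueness of the plane. One small slip: you write that three distinct points of $\mathcal{P}'$ project to ``at most two'' distinct points of $\mathcal{P}$ (false --- they may project to three); you clearly mean ``at least two,'' and since your argument only uses the fact that the projections are never all equal, nothing is affected --- your extra checks of the remaining hypotheses ($2t\ge 3$, a pair on multiple planes, the \emph{without dual} caveat when $s=2$) go slightly beyond what the paper records.
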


\begin{proof}
Firstly, there are $2n$ points and $m$ planes. Furthermore, we observe that there is a one-to-one correspondence between the lines of the configuration of order $1$ and the planes of the proposed configuration of order $2$. 

We then pick any three distinct points $p'_{i_{1},x_{1}},p'_{i_{2},x_{2}},p'_{i_{3},x_{3}}$ and consider the projection back onto the original configuration of order $1$. This yields $p_{i_{1}},p_{i_{2}},p_{i_{3}}$ where at least one is distinct - $x_{1},x_{2},x_{3}\in\{1,2\}$ cannot be distinct by the pigeon hole principle. By the property of the configuration of order $1$, these are all on at most one line. Hence by the one-to-one correspondence, $p'_{i_{1},x_{1}},p'_{i_{2},x_{2}},p'_{i_{3},x_{3}}$ can all be on at most one plane.
\end{proof}

To see a very simple example of this, we create a degree $2$ stacked configuration of order $2$ of the $4$-line geometry by:

\begin{exmp}
$$\mathcal{P}=\{\{1\},\{2\},\{3\},\{4\},\{5\},\{6\}\}$$ and $$\mathcal{L}=\{\{1,2,3\},\{1,4,5\},\{3,4,6\},\{2,5,6\}\}$$ becomes stacked as:
$$\mathcal{P}=\{\{1\},\{7\},\{2\},\{8\},\{3\},\{9\},\{4\},\{10\},\{5\},\{11\},\{6\},\{12\}\}$$ and $$\mathcal{E}=\{\{1,7,2,8,3,9\},\{1,7,4,10,5,11\},\{3,9,4,10,6,12\},\{2,8,5,11,6,12\}\}$$
\end{exmp}

Note that the original configuration of order $1$ had $s_{1}=2$ and $t_{1}=3$ and then the stacked configuration of order $2$  has $s_{2}=2$ and $t_{2}=6$ as determined by the previous proposition. So we can state that this is only in fact a configuration of order $2$ without dual.

This is the geometric configuration of order $2$ represented in Figure 1.

The effect on the Levi graph of simple stacking is to simply "double" the points. 

\begin{exmp}
For a very simple example we see the Levi graph of a simple stacking of a $3$-point configuration in Figure 3. Note that again this is a configuration without dual.
\end{exmp}
\begin{figure}[!h]
\centering
\includegraphics[height=6cm]{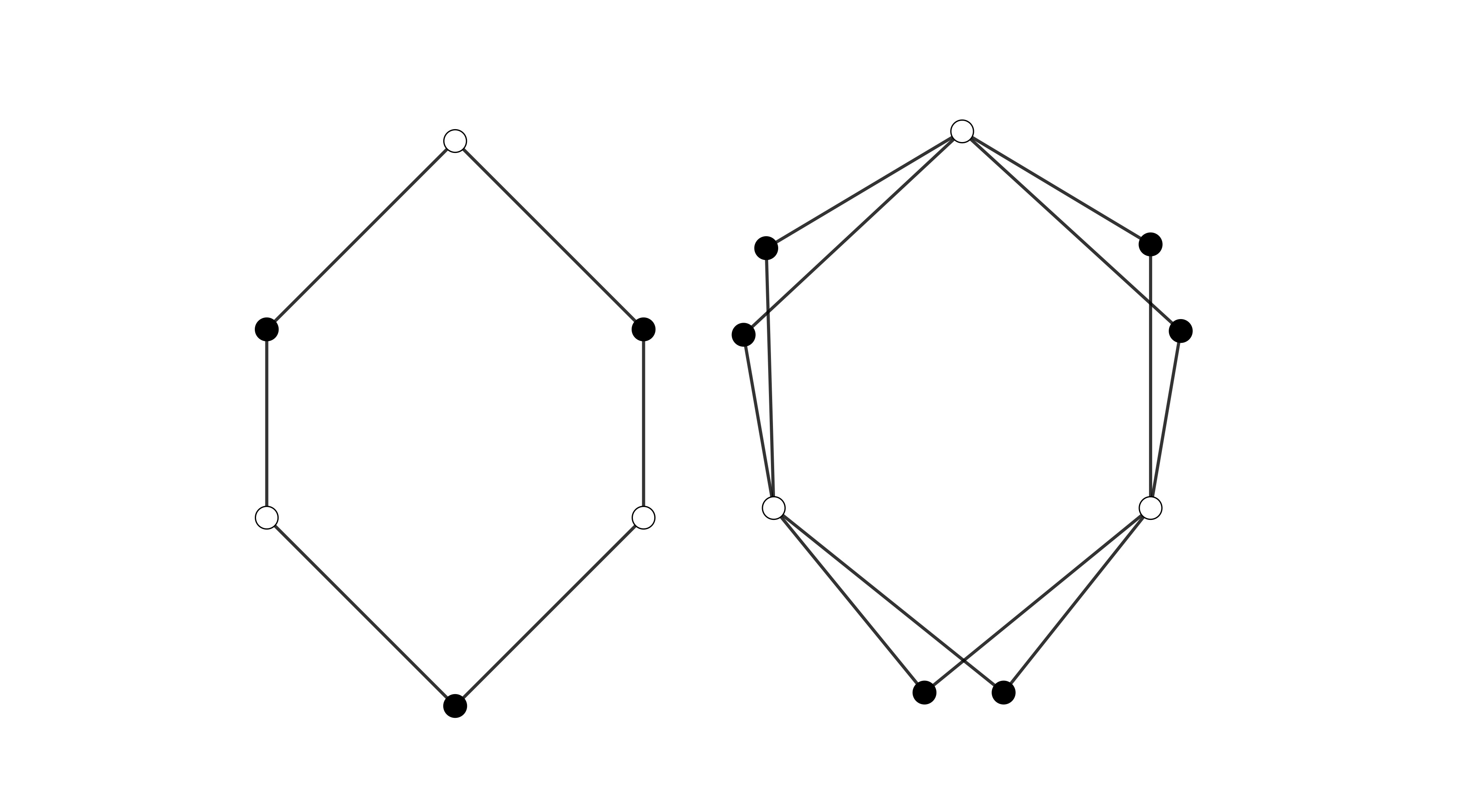}
\caption{Levi graph }
\end{figure}

\subsection{General stacked configurations}

We now note that we could in fact stack two different configurations of order $1$.

We give an example of such a configuration of order $2$ without dual:

\begin{exmp}
We take the square configuration of order $1$:
$$\mathcal{P}=\{\{1\},\{2\},\{3\},\{4\}\},\mathcal{L}=\{\{1,2\},\{2,3\},\{3,4\},\{1,4\}\}$$
and the $4$-line configuration:
$$\mathcal{P}_{2}=\{\{5\},\{6\},\{7\},\{8\},\{9\},\{10\}\},\mathcal{L}_{2}=\{\{5,6,7\},\{5,8,9\},\{6,9,10\},\{7,8,10\}\}$$

From this, we can take the points as the points from both the configurations of order $1$ and the planes as the union of a line from each of the two configurations of order $1$. That is:

$$\mathcal{P}=\{\{1\},\{2\},\{3\},\{4\},\{5\},\{6\},\{7\},\{8\},\{9\},\{10\}\}, \mathcal{E}=\{\{1,2,5,6,7\},\{2,3,5,8,9\},\{3,4,6,9,10\},\{1,4,7,8,10\}$$

It can be seen that each plane has $5$ points and that each point occurs on $2$ planes. Finally, it can be verified that any three points occurs on at most one plane and that there are pairs of points that occur on two planes. This is therefore a configuration of order $2$ without dual.

Note that the number of lines needed to be equal, and the number of lines per point needed to be equal for this construction to work.
\end{exmp}

We can now therefore define a general stacked configuration of order $2$ as follows:

\begin{defn}
A configuration of order $2$ is a general stacked configuration of order $2$ if the points can be divided into two sets $\mathcal{P}_{1}$ and $\mathcal{P}_{2}$ so that by defining:
$$\mathcal{L}_{1}=\{E\cap P_{1} | E \in \mathcal{E}\}$$
$$\mathcal{L}_{2}=\{E\cap P_{2} | E \in \mathcal{E}\}$$
We have two configurations of order $1$ given by $(\mathcal{P}_{1},\mathcal{L}_{1})$ and $(\mathcal{P}_{2},\mathcal{L}_{2})$. Here $P_{1}$ and $P_{2}$ are the unions of the elements of $\mathcal{P}_{1}$ and $\mathcal{P}_{2}$ respectively.
\end{defn}

We can then state the following results:

\begin{prop}
Two configurations of order $1$ with values $(m_{s},n_{t})$ and $(m'_{s^{\prime}},n'_{t^{\prime}})$ respectively can be stacked only if:

\begin{enumerate}
    \item $m=m'$
    \item $s=s'$
\end{enumerate}
\end{prop}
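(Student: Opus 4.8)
The plan is to reduce both conclusions to a single structural fact: that the assignment sending a plane $E$ of the stacked configuration to the line $E\cap P_1$ of the first configuration (and symmetrically $E\mapsto E\cap P_2$) is a bijection. Write the first configuration of order $1$ as having $n$ points, $m$ lines, $s$ lines through each point, and $t$ points per line, so that the incidence identity $ns=mt$ holds, and write the second with primed parameters $n',m',s',t'$ satisfying $n's'=m't'$. Let $(\mathcal{P},\mathcal{E})$ be the stacked configuration of order $2$, with $\mathcal{P}=\mathcal{P}_1\sqcup\mathcal{P}_2$, with $M:=|\mathcal{E}|$ planes, and with each point incident to $S$ planes. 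Since $P_1$ and $P_2$ are disjoint, every plane splits as $E=(E\cap P_1)\sqcup(E\cap P_2)$, a line of the first configuration together with a line of the second; in particular each plane carries exactly $t$ points of $\mathcal{P}_1$ and $t'$ points of $\mathcal{P}_2$.

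First I would carry out the double counting. Counting incidences between $\mathcal{P}_1$ and $\mathcal{E}$ in two ways gives $Mt=nS$, and likewise $Mt'=n'S$. Substituting $ns=mt$ to eliminate $n$ yields $M=mS/s$, and symmetrically $M=m'S/s'$, whence
$$\frac{m}{s}=\frac{m'}{s'}.$$
This relation is forced by regularity alone, but on its own it only pins down the ratio. The finish is immediate once the bijection is in hand: $E\mapsto E\cap P_1$ is onto $\mathcal{L}_1$ by the very definition of $\mathcal{L}_1$, so if it is also injective then $M=m$, and comparing with $M=mS/s$ gives $S=s$; the same argument applied to the second configuration gives $S=s'$. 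Hence $s=s'$, and then $m=M=m'$, which are exactly the two asserted conditions.

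The main obstacle is therefore the injectivity of $E\mapsto E\cap P_1$. Suppose two distinct planes $E\neq E'$ satisfy $E\cap P_1=E'\cap P_1$; then they share an entire line of the first configuration. When $t\geq 3$ this line contains three common points, so $E$ and $E'$ would be two planes through a common triple, contradicting the defining incidence axiom of a configuration of order $2$, and injectivity is automatic. The delicate case is $t=2$ (for instance when the first configuration is a graph, as with the square in Example 7.3): here $E$ and $E'$ need only share the two endpoints of the line, which the order-$2$ axiom permits, so Definition 7.2 does not by itself force the correspondence to be one-to-one. I expect this to be the real content. To make the statement go through one should read ``can be stacked'' in the sense of the explicit construction of Example 7.3, in which each plane is built as the union of one line from each configuration, so that the plane--line correspondence is a bijection by construction. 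Absent that reading I would instead try to control the multiplicities $N(\ell):=|\{E\in\mathcal{E}: E\cap P_1=\ell\}|$ directly, using that regularity forces $\sum_{\ell\ni p}N(\ell)=S$ at every point $p$; settling whether these per-point constraints alone pin down $M=m$ is, I believe, the crux of a fully general argument.
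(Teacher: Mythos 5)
Your double-counting step is correct, and it is a genuinely different route from the paper's: the paper argues by contradiction, claiming that $m\neq m'$ would force the existence of a plane $E$ with $E\cap P_{1}=\emptyset$ and $E\subset P_{2}$, hence $t_{1}=0$. That existence claim is never justified --- it is precisely your injectivity question in disguise, since it can only be extracted from the plane-to-line correspondences being bijections --- and the paper's proof never addresses conclusion 2 ($s=s'$) at all, whereas your incidence count $Mt=nS$, $Mt'=n'S$ at least pins down $m/s=m'/s'$ unconditionally. So your reduction of the whole statement to injectivity of $E\mapsto E\cap P_{1}$, and your observation that the order-$2$ axiom gives this for free exactly when $t\geq 3$, is both correct and sharper than what the paper does.

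Your worry about $t=2$ is not a defect of your argument: the proposition as stated, with ``stacked'' read via Definition 7.2, is false, and no analysis of per-point multiplicities can close the gap. Take $\mathcal{P}_{1}=\{1,2,3,4\}$ with the square (lines $\{1,2\},\{2,3\},\{3,4\},\{1,4\}$, so $m=4$, $s=2$) and $\mathcal{P}_{2}=\{5,6,7,8\}$ with the complete graph $K_{4}$ (all six pairs, so $m'=6$, $s'=3$), and let
$$\mathcal{E}=\{\{1,2,5,6\},\{1,2,7,8\},\{3,4,5,7\},\{3,4,6,8\},\{2,3,5,8\},\{1,4,6,7\}\}.$$
Every point lies on exactly $3$ planes, every plane has exactly $4$ points, any two planes meet in at most $2$ points (so any three points lie on at most one plane), and the pair $\{1,2\}$ lies on two planes; thus this is a legitimate configuration of order $2$ (indeed with $s=3$, $t=4$, not even ``without dual''). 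Intersecting the planes with $P_{1}$ returns exactly the square's lines, and with $P_{2}$ exactly the lines of $K_{4}$, so the square and $K_{4}$ are stacked in the sense of Definition 7.2, yet $m\neq m'$ and $s\neq s'$ --- only your ratio identity $4/2=6/3$ survives. Note the failure of injectivity you predicted: $\{1,2,5,6\}$ and $\{1,2,7,8\}$ have the same trace on $P_{1}$. So the statement is true only under your alternative reading, in which ``can be stacked'' means the Example 7.3 construction pairing lines bijectively; under that reading your bijection argument finishes the proof immediately. In short, the crux you identified is a real gap, but it is a gap in the paper's statement and proof, not in your approach.
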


\begin{proof}
Suppose $m\neq m'$ so then without loss of generality there is some $E\in \mathcal{E}$ so that $E \cap P_{1}=\emptyset$ and $E\subset P_{2}$. So then $E\in L_{2}$ which implies $t=t_{2}$ and as $t=t_{1}+t_{2}$ we must have $t_{1}=0$. This is our contradiction and so $m_{1}=m_{2}$.
\end{proof}

\begin{cor}
Suppoose that two configurations of order $1$ with values $(m_{s},n_{t})$ and $(m'_{s^{\prime}},n'_{t^{\prime}})$ respectively satisfy the requirements of Proposition 7.2, then the resultant general stacked configuration of order $2$ will have values:

\begin{enumerate}
    \item $m_{2}=m=m'$
    \item $s_{2}=s=s'$
    \item $n_{2}=n+n'$
    \item $t_{2}=t+t'$
\end{enumerate}
\end{cor}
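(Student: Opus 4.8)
The plan is to read off each of the four parameters directly from the construction, using the bijective pairing of lines that underlies a general stacked configuration. By Proposition 7.2 the two input configurations share the same number of lines $m=m'$ and the same point-valency $s=s'$, so I may fix a bijection $\sigma$ between the $m$ lines of $(\mathcal{P}_1,\mathcal{L}_1)$ and the $m$ lines of $(\mathcal{P}_2,\mathcal{L}_2)$ and form each plane as $e=\ell\cup\sigma(\ell)$. Since $P_1$ and $P_2$ come from different configurations they are disjoint, which makes every count below additive across the two parts.

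First I would count planes and points. The planes are indexed by the $m$ lines $\ell\in\mathcal{L}_1$, one plane $\ell\cup\sigma(\ell)$ per line, and distinct lines give distinct planes because $\sigma(\ell)\subseteq P_2$ forces the $P_1$-parts of the two planes to differ; hence $m_2=m=m'$. The point set is the disjoint union $\mathcal{P}_1\sqcup\mathcal{P}_2$, so $n_2=n+n'$.

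Next I would count points per plane and planes per point. A plane $\ell\cup\sigma(\ell)$ is the disjoint union of a line of the first configuration (with $t$ points) and a line of the second (with $t'$ points), giving $t_2=t+t'$. For the valency, a point $p\in P_1$ lies on $\ell\cup\sigma(\ell)$ precisely when $p\in\ell$, i.e. exactly for the $s$ lines of $(\mathcal{P}_1,\mathcal{L}_1)$ through $p$; since $\sigma$ is a bijection these produce $s$ distinct planes, so $p$ has valency $s$. Symmetrically every point of $P_2$ has valency $s'$, and because $s'=s$ the valency is uniform, giving $s_2=s=s'$.

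The step I expect to require the most care is this valency computation: I must use that the line-pairing $\sigma$ is an honest bijection (not merely a correspondence) to guarantee that the $s$ lines through a point $p\in P_1$ yield $s$ genuinely distinct planes, and I must check that points of $P_2$ produce the same number, so that $s_2$ is well defined for all points simultaneously — this is exactly where the hypotheses $m=m'$ and $s=s'$ of Proposition 7.2 are invoked. As a consistency check I would finally confirm the incidence identity of Proposition 4.1, namely $(n+n')s=m(t+t')$, which follows by adding $ns=mt$ and $n's=mt'$, the latter being $n's'=m't'$ rewritten using $s'=s$ and $m'=m$.
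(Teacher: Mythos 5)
Your proof is correct and takes the same route as the paper, which simply states that the corollary ``follows directly from the definition'' (or from the counting identity $ns=mt$); you have just written out that direct verification in full, including the line-pairing bijection and the final consistency check. No gaps — your careful treatment of why the $s$ lines through a point yield $s$ distinct planes is exactly the detail the paper leaves implicit.
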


\begin{proof}
This follows from Proposition 4.2. or directly from the definition.
\end{proof}

We remark here that you can very naturally extend the definitions to stacking any number of configurations of order $1$. 

We have seen that stacked configurations of order $2$ exist. It now remains to see whether all configurations of order $2$ are stacked. Firstly, any configuration of order $2$ with an prime number of points cannot be stacked, but we offer the complete configuration as in section 5 with six points as palpably not two 3-point configurations stacked.

\subsection{Stacked Fano configuration}

For a further example we here stack two Fano planes:

\begin{exmp}
\begin{figure}[!h]
\centering
\includegraphics[height=6cm]{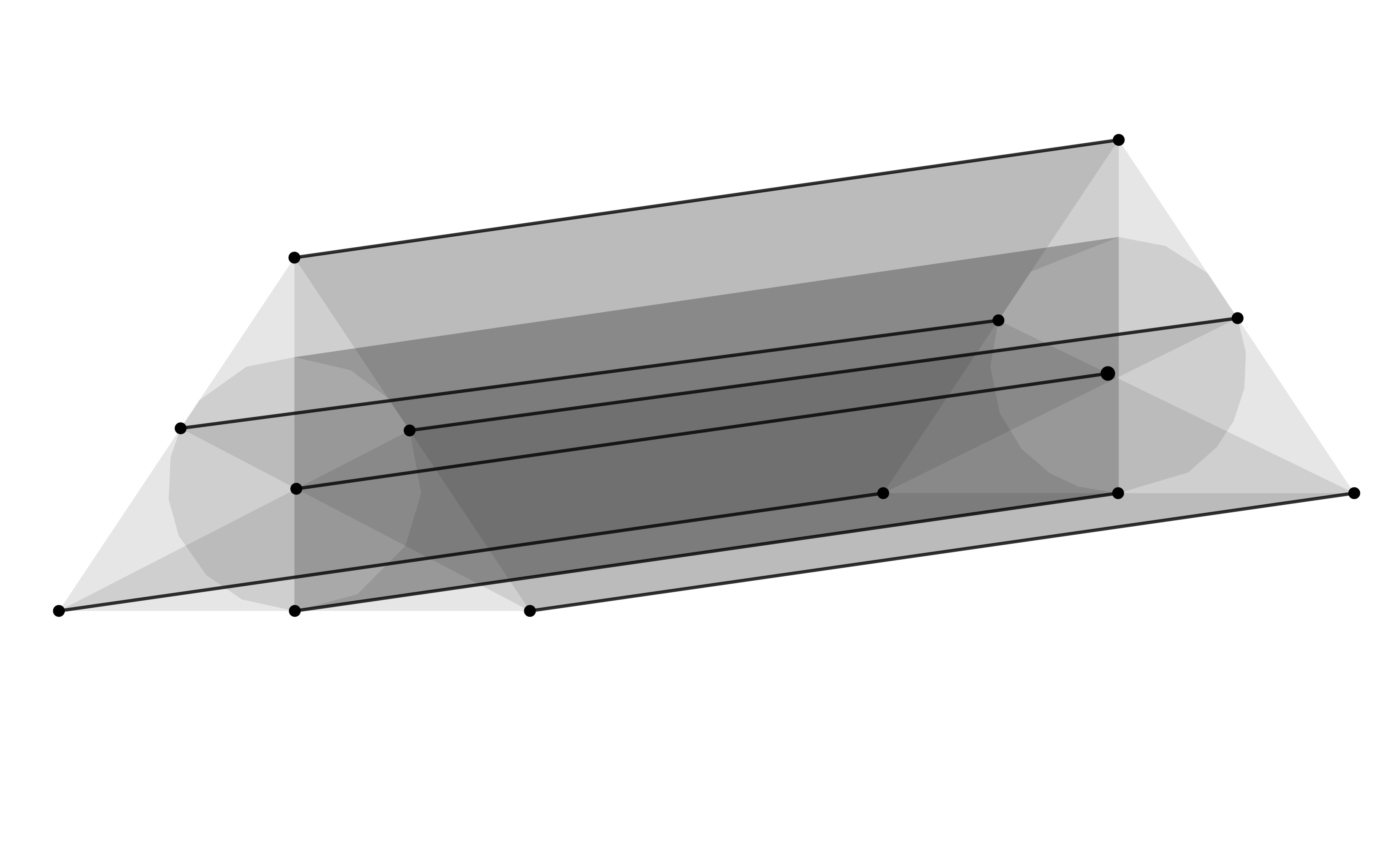}
\caption{Stacked Fano plane}
\end{figure}
\end{exmp}

The above figure shows a topological embedding - one "plane" is a cylinder. We now establish that this configuration of order $2$ is not geometric.

\begin{prop}
The stacked Fano plane is not geometric.
\end{prop}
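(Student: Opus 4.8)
The plan is to assume a geometric realization exists and reduce, via projective duality in $\mathbb{RP}^3$, to the classical fact that the Fano plane admits no straight-line realization over $\mathbb{R}$. I write the $14$ points as $P_v, Q_v$, where $v$ ranges over the $7$ points of the underlying Fano plane and $P_v, Q_v$ are its two stacked copies; the seven $2$-planes are $e_i = l_i \cup l_i'$, one per Fano line $l_i$, with $l_i, l_i'$ the two copies of $l_i$. The first observation I would record is that for each Fano point $v$ the pair $\{P_v, Q_v\}$ is incident with every $2$-plane $e_i$ for which $v \in l_i$; since $v$ lies on exactly three Fano lines, this is a pair of points lying on three common planes — exactly the failure of the order-$1$ axiom that makes the object genuinely of order $2$.

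Now suppose for contradiction that the associated generalized abstract polytope embeds geometrically in the sense of Definitions 5.4 and 5.5. Since a Euclidean realization is in particular projective, I would pass to $\mathbb{RP}^3$, so the $14$ points sit at distinct locations and each $e_i$ spans a genuine plane $\Pi_i$ carrying its six points. Assuming the realization is faithful (the seven $\Pi_i$ distinct, and each spanned rather than collapsed to a line), the key step is immediate: because $P_v$ and $Q_v$ are distinct points both lying on each of the three planes $\Pi_i$ with $v \in l_i$, the line $\ell_v := \overline{P_v Q_v}$ is contained in all three of those planes.

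The crux is then a dualization. Applying the standard projective duality of $\mathbb{RP}^3$ (points $\leftrightarrow$ planes, lines $\leftrightarrow$ lines, incidence reversed), the seven distinct planes $\Pi_i$ become seven distinct points $\Pi_i^\ast$, and the statement ``$\ell_v \subset \Pi_i$ for the three lines $i$ through $v$'' becomes ``the three points $\Pi_i^\ast$ with $v \in l_i$ are collinear'' on the dual line $\ell_v^\ast$. The resulting incidence pattern — seven points $\Pi_i^\ast$ together with seven collinear triples indexed by the Fano points $v$, where $\Pi_i^\ast$ lies on $\ell_v^\ast$ iff $v \in l_i$ — is exactly the dual of the Fano plane, hence (the Fano plane being self-dual) a Fano configuration realized by genuinely collinear points in $\mathbb{RP}^3$. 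Projecting from a generic center onto a plane $\mathbb{RP}^2$ keeps the seven points distinct and sends collinear triples to collinear triples, producing a straight-line realization of the Fano plane in the real projective plane, contradicting its classical non-realizability over $\mathbb{R}$.

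The step I expect to be the main obstacle is the careful management of degeneracies around the duality and the projection: ruling out that the $\Pi_i^\ast$ coincide or sit in special position (for instance all seven collinear, corresponding to the $\Pi_i$ forming a pencil and the embedding collapsing), and choosing the projection center to avoid the finitely many directions that would destroy distinctness or a required collinearity. I would handle this by first arguing distinctness of the $\Pi_i$ from the fact that $e_i \cap e_j$ consists of only the two points $\{P_w, Q_w\}$ with $w = l_i \wedge l_j$, so distinct $2$-planes cannot be carried to the same geometric plane in a faithful embedding; the remaining claims are then genericity arguments. The entire reduction rests on the single classical input that the Fano plane has no realization by distinct collinear points over $\mathbb{R}$. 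I would close by remarking that the argument uses only that the two layers are identified by a collineation of the Fano plane, so it applies to the stacked Fano plane as constructed.
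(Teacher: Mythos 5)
Your proposal is correct in outline and rests on exactly the same two pillars as the paper's proof: the structural fact that, for each Fano point $v$, the line $\ell_v$ through its two stacked copies must lie in all three geometric planes corresponding to the Fano lines through $v$, and the classical non-realizability of the Fano plane by points and straight lines over $\mathbb{R}$. Where you differ is the mechanism for dropping from dimension $3$ to dimension $2$. The paper slices: it picks a generic plane meeting all seven geometric planes, avoiding their pairwise intersection points and containing none of their intersection lines, so that the seven lines of intersection, together with the points where the lines $\ell_v$ cross the slicing plane, realize the Fano incidences directly. You instead dualize in $\mathbb{RP}^3$ (the seven planes become seven points, the seven lines $\ell_v$ become seven lines carrying collinear triples) and then project from a generic center into $\mathbb{RP}^2$, obtaining the dual Fano configuration, which is Fano again by self-duality. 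These mechanisms are projectively dual to one another---slicing by a plane $H$ is projection from the dual point of $H$---so the two arguments are twins rather than strangers. The paper's version buys economy: it stays in Euclidean $3$-space, needs no duality machinery and no appeal to self-duality, and makes a single genericity choice. Your version buys explicitness: you isolate the key lemma (three planes sharing each $\ell_v$, forced by the pair $\{P_v,Q_v\}$ lying on three common planes) that the paper leaves compressed inside the phrase ``necessarily in a Fano configuration,'' and you flag the degeneracy issues both proofs must dispatch. One refinement: excluding the pencil case (all seven dual points collinear) is not a genericity argument, as you suggest, but a structural one---if all seven planes contained a common line, then every point, lying on three of them, would lie on that line, so each plane's six points would be collinear, contradicting the spanning hypothesis in your faithfulness assumption. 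With that repair your proof is at least as rigorous as the paper's, which glosses over the analogous degeneracies entirely.
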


\begin{proof}
Suppose a geometric realization does exist. Consider the set of intersections of these geometric planes. This consists of either geometric points or geometric lines. Then choose a plane that does not intersect the intersection points and does not contain the intersection lines but does intersect all of the geometric planes. Then this plane will contain seven lines of intersection with the geometric planes necessarily in a Fano configuration. This is a contradiction.
\end{proof}

\subsection{Product configurations}

We finish this section with another class of configurations of higher order that are truly \textit{products} by taking motivation from graph theory. In particular, we consider the Cartesian product (also known as box product). For more information on these graph theory topics see \cite{harary1969graph} or \cite{imrich2008topics}.

\begin{defn}
Let $(\mathcal{P}_{1},\mathcal{L}_{1})$ and $(\mathcal{P}_{2},\mathcal{L}_{2})$ be two configurations of order $1$, then the cartesian product of these two configurations is given by $(\mathcal{P},\mathcal{E})$ such that:
\begin{enumerate}
\item The point set is $\mathcal{P}=\mathcal{P}_{1}\times\mathcal{P}_{2}$
\item The planes are given by $\mathcal{E}=\{l_{1}\times l_{2}|l_{1}\in\mathcal{L}_{1},l_{2}\in\mathcal{L}_{2}\}$
\end{enumerate}
\end{defn}

It remains to once again check that this is well-defined:

\begin{prop}
The above definition is well-defined. Furthermore, if the configurations of order $1$ are of type $(n_{1}{}_{s_{1}},m_{1}{}_{t_{1}})$ and $(n_{2}{}_{s_{2}},m_{2}{}_{t_{2}})$, then the product configuration is of order $\text{max}\{t_{1},t_{2}\}$ and will be of type $(n_{1}n_{2}{}_{s_{1}s_{2}},m_{1}m_{2}{}_{t_{1}t_{2}})$.
\end{prop}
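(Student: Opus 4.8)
The plan is to separate the two claims: that the construction yields a valid configuration of the stated type $(n_1 n_2{}_{s_1 s_2}, m_1 m_2{}_{t_1 t_2})$, and that its order equals $\max\{t_1,t_2\}$. The type follows from straightforward counting. First I would observe that the map $(l_1,l_2)\mapsto l_1\times l_2$ is injective on $\mathcal{L}_1\times\mathcal{L}_2$, since a nonempty product determines its two factors; hence there are no collisions and $|\mathcal{E}|=m_1 m_2$, while $|\mathcal{P}|=n_1 n_2$. Each plane $l_1\times l_2$ contains exactly $|l_1|\,|l_2|=t_1 t_2$ points, so $t=t_1 t_2$. A point $(p_1,p_2)$ lies on $l_1\times l_2$ precisely when $p_1\in l_1$ and $p_2\in l_2$; since $p_1$ lies on $s_1$ lines of $\mathcal{L}_1$ and $p_2$ on $s_2$ lines of $\mathcal{L}_2$, the point lies on exactly $s_1 s_2$ planes, so $s=s_1 s_2$. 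As a consistency check, this agrees with the relation $ns=mt$ established earlier.

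The crux is pinning down the order, for which I would reduce the incidence axiom to a statement about pairwise plane intersections. Writing $k=\max\{t_1,t_2\}$, the existence of $k+1$ points lying on two distinct planes is equivalent to two distinct planes meeting in at least $k+1$ points, so I would instead compute the maximum size of $E\cap E'$ over distinct planes. Using $(l_1\times l_2)\cap(l_1'\times l_2')=(l_1\cap l_1')\times(l_2\cap l_2')$ together with the order-$1$ property that distinct lines meet in at most one point, I would split into three cases according to whether the factors coincide: if $l_1=l_1'$ and $l_2\neq l_2'$ the intersection has at most $t_1\cdot 1=t_1$ points; symmetrically at most $t_2$ points if only the second factors differ; and at most $1$ point if both factors differ. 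Hence any two distinct planes meet in at most $\max\{t_1,t_2\}=k$ points, which yields the ``at most one plane through any $k+1$ points'' axiom and shows the order is not larger than $k$.

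To finish I would establish the matching lower bound, namely that there exist $k$ points lying on more than one plane. Assuming without loss of generality $t_1\geq t_2$, I would use that each point of $\mathcal{P}_2$ lies on $s_2\geq 2$ lines, so some $p_2$ lies on two distinct lines $l_2,l_2'\in\mathcal{L}_2$; then for any fixed $l_1\in\mathcal{L}_1$ the planes $l_1\times l_2$ and $l_1\times l_2'$ both contain the $t_1=k$ points of $l_1\times\{p_2\}$, exhibiting $k$ points on two distinct planes. Combined with the upper bound, this shows the order is exactly $\max\{t_1,t_2\}$. The main obstacle I anticipate is precisely this lower-bound step: unlike the counting, it requires verifying that the value $\max\{t_1,t_2\}$ is actually attained, which rests on the existence of two order-$1$ lines sharing a point (guaranteed by $s_i\geq 2$) and on selecting the larger factor. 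I would also note in passing that, since the axioms ask for $s,t\geq k+1$ while here $s=s_1 s_2$ need not exceed $\max\{t_1,t_2\}$, the product is in general a configuration of order $k$ without dual.
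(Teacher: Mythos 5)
Your proof is correct, and it is organized differently from the paper's. For the key incidence axiom the paper argues from the point side: it takes $t+1$ points lying on a common plane, projects them onto the two factor configurations, observes that the projections land inside lines of sizes $t_{1}$ and $t_{2}$, and concludes (rather tersely) that two planes containing all $t+1$ points must coincide. You argue from the plane side: using $(l_{1}\times l_{2})\cap(l_{1}'\times l_{2}')=(l_{1}\cap l_{1}')\times(l_{2}\cap l_{2}')$ and the fact that distinct order-$1$ lines share at most one point, you bound the intersection of any two distinct planes by $\max\{t_{1},t_{2}\}$ via a three-case analysis. Both arguments ultimately rest on the same fact, but yours yields a sharp quantitative bound on pairwise plane intersections and avoids the pigeonhole bookkeeping that the paper leaves implicit. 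More importantly, you prove something the paper's proof omits entirely: the non-degeneracy condition that some collection of $k=\max\{t_{1},t_{2}\}$ points lies on more than one plane, which the paper's definition of a configuration of order $k$ explicitly requires; your construction of the $k$ points $l_{1}\times\{p_{2}\}$ with $p_{2}\in l_{2}\cap l_{2}'$ does exactly this, and it is what certifies that the order is exactly $\max\{t_{1},t_{2}\}$ rather than merely at most that. Your closing remark that $s=s_{1}s_{2}$ may fall below $k+1$, so that the product is in general only a configuration of order $k$ \emph{without dual}, is likewise a real point that the proposition and its proof in the paper gloss over.
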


\begin{proof}
It is clear that there must be now $n_{1}n_{2}$ points and $m_{1}m_{2}$ planes by the following bijections $f$ and $g$. 

$$f:\mathcal{P}\rightarrow \mathcal{P}_{1}\times \mathcal{P}_{2}$$

$$g:\mathcal{E}\rightarrow \mathcal{L}_{1}\times \mathcal{L}_{2}$$

Each plane necessarily contains now $t_{1}t_{2}$ points and each point is on $s_{1}s_{2}$ planes.

Let $\textit{max}\{t_{1},t_{2}\}=t$.

We then take points $a_{1},\ldots,a_{t+1}\in\mathcal{P}$. If $f_{1}$ and $f_{2}$ are the projections onto $\mathcal{P}_{1}$ and $\mathcal{P}_{2}$ respectively, then by necessity we must have that the collections:

$$f_{1}(a_{1}),\ldots,f_{1}(a_{t+1})$$

and

$$f_{2}(a_{1}),\ldots,f_{2}(a_{t+1})$$

contain only $t_{1}$ distinct points in $\mathcal{P}_{1}$ and $t_{2}$ points in $\mathcal{P}_{2}$ respectively.

But we know that each plane contains exactly $t_{1}t_{2}$ points, hence if two planes contain all these $t+1$ points, they must be equal.

\end{proof}

\begin{cor}
The only product configurations of order $2$ have $t=4$.
\end{cor}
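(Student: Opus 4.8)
The plan is to read off the conclusion directly from the preceding proposition on product configurations, which computes both the order and the type of the product. That proposition tells us two things simultaneously: the product of configurations of order $1$ of types $(n_1{}_{s_1},m_1{}_{t_1})$ and $(n_2{}_{s_2},m_2{}_{t_2})$ has order $\max\{t_1,t_2\}$, and its number of points per plane is $t_1 t_2$. So the corollary is really a statement about when $\max\{t_1,t_2\}$ can equal $2$, and what $t_1 t_2$ must then be.

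First I would impose the defining constraint: for the product to be a configuration of order $2$, we require $\max\{t_1,t_2\}=2$. Next I would recall that $(\mathcal{P}_1,\mathcal{L}_1)$ and $(\mathcal{P}_2,\mathcal{L}_2)$ are genuine configurations of order $1$, so by the order-$1$ definition each line carries at least two points, giving $t_1\geq 2$ and $t_2\geq 2$. Combining $t_i\geq 2$ with $\max\{t_1,t_2\}=2$ forces $t_1=t_2=2$; this is the one small logical step where both the lower bound and the upper bound must be squeezed together.

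Finally I would substitute back into the type formula: the points-per-plane of the product is $t_1 t_2 = 2\cdot 2 = 4$, which is exactly the claimed value $t=4$. I would also remark that such products genuinely occur (for instance, taking two copies of a configuration all of whose lines have exactly two points), so the statement is not vacuous.

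I do not anticipate a serious obstacle here, since everything follows from the already-established proposition; the only point requiring care is the justification that $t_1,t_2\geq 2$ rather than allowing degenerate lines, together with being explicit that $t$ in the corollary refers to the points-per-plane parameter of the product and not to either factor's parameter. Keeping that bookkeeping straight is the whole content of the argument.
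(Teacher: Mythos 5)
Your proof is correct and is essentially the paper's own argument: the paper also deduces from the preceding proposition that $\max\{t_1,t_2\}=2$ forces $t_1=t_2=2$ (using $t_i\geq 2$ for configurations of order $1$), whence $t=t_1t_2=4$. You merely spell out the bookkeeping more explicitly than the paper does.
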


\begin{proof}
This follows as $\text{max}\{t,t'\}=2$ implies $t=t'=2$.
\end{proof}

\begin{cor}
The only symmetric product configurations of order $2$ are product configurations of polygons.
\end{cor}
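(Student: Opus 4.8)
The plan is to combine the previous corollary, which already forces $t_1=t_2=2$ on the two factors of any order-$2$ product, with the symmetry hypothesis to pin down $s_1=s_2=2$ as well; a configuration of order $1$ with $s=t=2$ is exactly a polygon, which is precisely what the statement asserts.

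First I would unpack what $t_i=2$ means for each factor $(\mathcal{P}_i,\mathcal{L}_i)$. If every line carries exactly two points, then its lines are unordered pairs, and the defining requirement that any two points lie on at most one common line forbids repeated pairs; thus each factor is an $s_i$-regular simple graph on $n_i$ vertices with $m_i$ edges. Feeding $t_i=2$ into the incidence identity $n_is_i=m_it_i$ (Proposition 3.1) gives $m_i=\tfrac{1}{2}n_is_i$. Next I would bring in symmetry. A configuration of order $2$ is symmetric precisely when its point count equals its plane count, and by the product construction (Proposition 7.5) these are $n_1n_2$ and $m_1m_2$ respectively. Hence symmetry is the single equation $n_1n_2=m_1m_2$; substituting $m_i=\tfrac{1}{2}n_is_i$ yields $n_1n_2=\tfrac{1}{4}n_1n_2\,s_1s_2$, so $s_1s_2=4$. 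Because each factor is a bona fide configuration of order $1$ we have $s_i\ge 2$, and the only way to write $4$ as a product of two integers each at least $2$ is $2\cdot 2$. Therefore $s_1=s_2=2$.

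It then remains to identify a configuration of order $1$ with $s=t=2$ as a polygon. Lines are pairs and every point meets exactly two of them, so the associated graph is $2$-regular with no repeated edges, i.e. a disjoint union of cycles; a single cycle on $n$ vertices is precisely the $n$-gon realized as a configuration. I expect the one genuine subtlety to be the possibility of a \emph{disconnected} factor, since a $2$-regular graph need not be a single polygon but may split into several cycles. I would dispatch this either by adopting the standing convention that the constituent configurations are connected, or by reading ``polygon'' to subsume the degenerate disjoint-union case; under either reading the forward direction is complete. The converse is immediate: a polygon has $m=n$, so Proposition 7.5 makes the product of two polygons a configuration of order $2$ with equal numbers of points and planes, hence symmetric.
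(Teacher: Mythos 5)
Your proof is correct and follows essentially the same route as the paper: the paper's own proof is the one-liner ``this follows immediately from the previous corollary,'' and your argument is exactly the natural filling-in of that claim --- using $t_1=t_2=2$ from the previous corollary, then the incidence count $n_is_i=m_it_i$ together with symmetry to force $s_1s_2=4$, hence $s_1=s_2=2$, hence polygons. Your extra care about disconnected $2$-regular factors (disjoint unions of cycles) addresses a point the paper silently glosses over when it identifies $s=t=2$ configurations with polygons, and is a worthwhile refinement rather than a deviation.
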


\begin{proof}
This follows immediately from the previous corollary.
\end{proof}

To finish this section we give the following propositions and question regarding these constructed configurations of higher orders.

\begin{prop}
If a configuration of order $1$ is geometric then so is any simple stacked configuration of order $2$ of it.
\end{prop}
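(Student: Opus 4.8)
The plan is to use the geometric realization of the order-$1$ configuration as the floor $\{z=0\}$ inside $\mathbb{R}^{3}$ and to produce the two stacked copies of each point by lifting it vertically, so that every line is carried to a vertical plane. Concretely, suppose the order-$1$ configuration $(\mathcal{P},\mathcal{L})$ is realized geometrically by an assignment $p_{i}\mapsto (x_{i},y_{i})$ of distinct points in the Euclidean plane such that each line $l_{i}$ is carried by a genuine straight line $L_{i}$ passing through exactly the images of its incident points. I identify this plane with $\{z=0\}\subset\mathbb{R}^{3}$.

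First I would define the embedding of the $2n$ stacked points by
$$p'_{i,1}\mapsto (x_{i},y_{i},1),\qquad p'_{i,2}\mapsto (x_{i},y_{i},-1).$$
Since the original points are distinct, these $2n$ image points are pairwise distinct: two lifts of the same original point differ in their $z$-coordinate, and lifts of different original points already differ in their $(x,y)$-coordinates. Next, for each plane $e_{i}$ (which by Definition 7.1 corresponds to $l_{i}$ and collects the two lifts of every point on $l_{i}$) I take the vertical plane $\Pi_{i}$ spanned by $L_{i}$ and the direction $(0,0,1)$. The key step is to observe that every image point of $e_{i}$ lies on $\Pi_{i}$: the projection $(x,y,z)\mapsto(x,y,0)$ sends each lift $(x_{k},y_{k},\pm 1)$ to $(x_{k},y_{k},0)\in L_{i}$, and $\Pi_{i}$ is exactly the preimage of $L_{i}$ under this projection. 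Hence all $2t$ points of $e_{i}$ are coplanar, lying on the genuine $2$-dimensional plane $\Pi_{i}$. This is precisely the condition demanded by Definition 5.4 for a geometric realization of the associated generalized abstract polytope, so by Definition 5.5 the stacked configuration of order $2$ is geometric.

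I do not expect a serious obstacle here, since the verification is essentially immediate once the vertical-lift idea is in place; the points requiring care are bookkeeping ones about distinctness and the rank-$1$ faces. I would check that distinct lines yield distinct planes (if $L_{i}\neq L_{j}$ then $\Pi_{i}\neq\Pi_{j}$, as a vertical plane is determined by its trace on $\{z=0\}$) and that the induced rank-$1$ faces behave correctly: combinatorially $e_{i}\cap e_{j}$ is either empty or consists of the two lifts of a single common point $p_{k}\in l_{i}\cap l_{j}$, while geometrically $\Pi_{i}\cap\Pi_{j}$ is the vertical line through $(x_{k},y_{k},0)$, which contains exactly those two lifts. Thus the rank-$1$ faces automatically lie on lines, as required.

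Finally, as in the order-$1$ case, two planes $\Pi_{i},\Pi_{j}$ may still meet along a vertical line lying over a non-configuration intersection point of $L_{i}$ and $L_{j}$; this line carries no lifted points and, exactly as remarked in Section 5, does not violate geometric realizability. The whole argument depends only on the hypothesis that each $l_{i}$ is realized by a straight line, so it applies to any geometric order-$1$ configuration and to any of its simple stacked configurations of order $2$.
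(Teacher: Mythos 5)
Your proof is correct and is essentially the paper's own argument: the paper's (one-line) proof simply says to define the planes as the lines cross $\mathbb{R}$, which is precisely your vertical-plane construction $\Pi_{i}=L_{i}\times\mathbb{R}$ with the two lifts of each point placed on the vertical line above it. You have merely supplied the routine verifications (distinctness of points and planes, coplanarity, behavior of the rank-$1$ faces) that the paper leaves as an exercise.
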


\begin{proof}
This is a fairly straight forward exercise in defining the planes to be the lines cross $\mathbb{R}$.
\end{proof}

Some valid questions might be:

Question 2: Are all general stacked configurations order 2 geometric if each configuration of order 1 is geometric?

Question 3: If two configurations of order $1$ are geometric then is the product configuration of order $2$ also geometric?

\section{Calculating symmetric configurations of order $2$ with $s=3$}

We here take the time to generate some specific configurations of order $2$ in the symmetric case with $s=3$. These are generated using the GAP algebra coding language and the specific code is in a github repository available at \cite{Peet_Configurations_of_higher_2022}. Table 1 (which follows the references) shows all possible configuration of order $2$ for $4\leq n \leq 8$.

The analogous case for configurations of order $1$ would be symmetric configurations with $s=2$. These can be viewed simply as polygons. It could be suggested that then the symmetric configurations of order $2$ with $s=3$ will able to be viewed as triangulated surfaces. 

We will see that this is not true and give the following example to illustrate:

\begin{exmp}
$$\mathcal{P}=\{\{1\},\{2\},\{3\},\{4\},\{5\},\{6\},\{7\},\{8\},\{9\}\}$$
$$\mathcal{E}=\{\{1,2,3\},\{1,2,5\},\{1,2,5\},\{3,6,7\},\{4,6,7\},\{5,6,7\},\{3,8,9\},\{4,8,9\},\{5,8,9\}\}$$

This has $m=n=9$. As you can see in Figure 5, this is certainly not a triangulated surface as each of the lines shown meet three planes. Note that provided the three lines shown are not coplanar this is geometric. Also note that although there are planes intersecting they are only referring to one point in the configuration, not at least two as might be expected. 
\end{exmp}

\begin{figure}[!h]
\centering
\includegraphics[height=6cm]{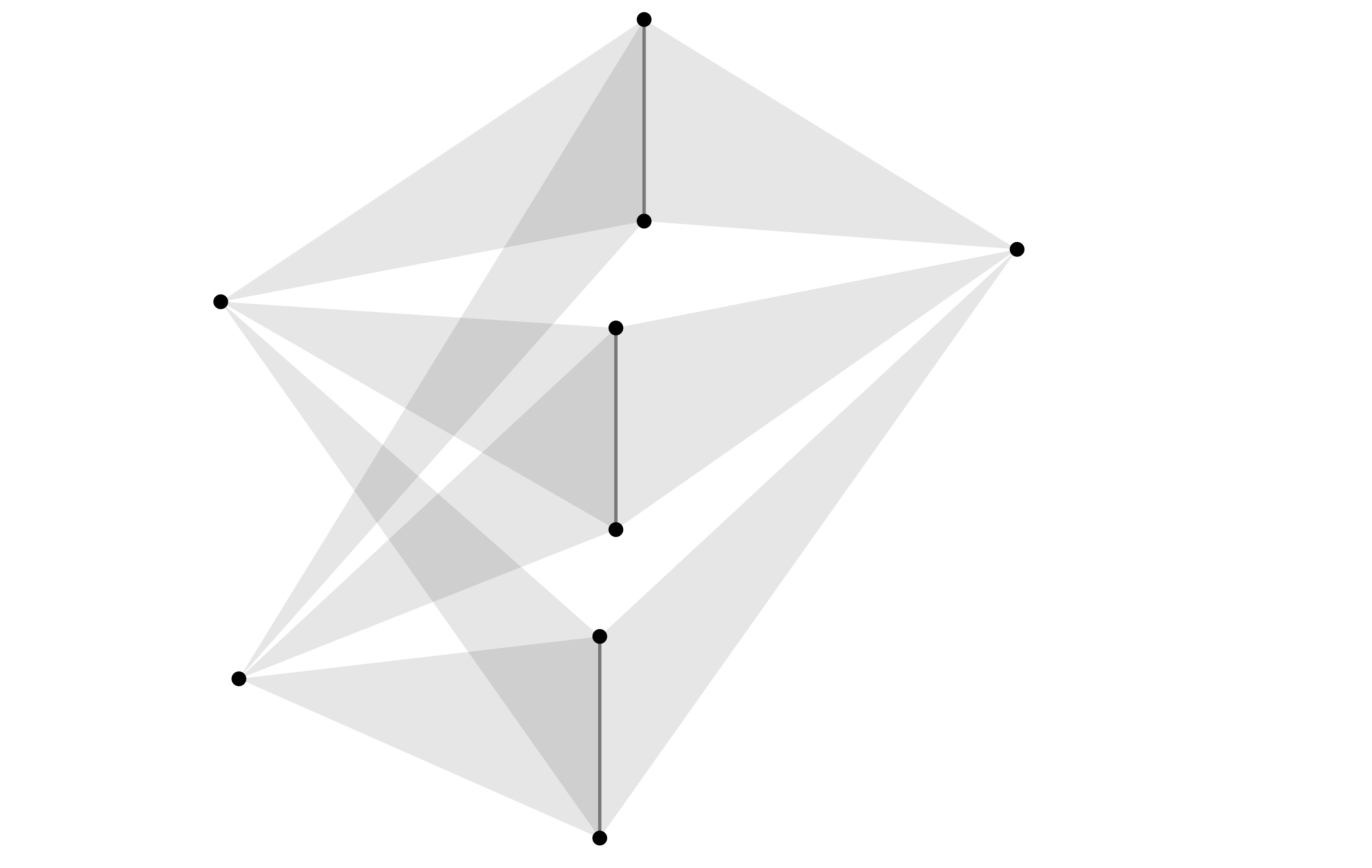}
\caption{Example 8.1}
\end{figure}

\begin{table}[h]
\begin{center}
\begin{math}
\begin{array}{ | l | l | l | l |}
\hline
    \text{Name} &  n  & \text{Planes}  & \text{Automorphism group}\\ \hline
     4.1 & 4 & \{1,2,3\},\{1,2,4\},\{1,3,4\},\{2,3,4\}   &  S_{4} \\ \hline
     5.1 & 5 & \{1,2,5\},\{1,3,4\}, \{1,4,5\},\{2,3,4\},\{2,3,5\}  &  D_{5} \\ \hline
     6.1 & 6    & \{1,2,3\},\{1,2,4\},\{1,3,5\},\{2,4,6\},\{3,5,6\},\{4,5,6\}& D_{6} \\ \hline
     6.2 & 6    &\{1,2,3\},\{1,2,4\},\{1,3,5\},\{2,5,6\},\{3,4,6\},\{4,5,6\} & \mathbb{Z}_{4} \\ \hline
     6.3 & 6   &\{1,2,3\},\{1,2,4\},\{1,5,6\},\{2,5,6\},\{3,4,5\},\{3,4,6\} & \mathbb{Z}_{2} \times A_{4}\\\hline
      7.1 & 7   & \{1, 4, 7 \}, \{ 1, 5, 6 \}, \{1, 6, 7 \}, \{ 2, 3, 6 \}, \{ 2, 3, 7 \}, 
      \{ 2, 4, 5 \}, \{ 3, 4, 5 \}  & \mathbb{Z}_{2}\times\mathbb{Z}_2 \\\hline
      7.2 & 7   &  \{ 1, 4, 7 \}, \{ 1, 5, 7 \}, \{ 1, 6, 7 \}, \{ 2, 3, 5 \}, \{ 2, 3, 6 \}, 
      \{ 2, 4, 6 \}, \{ 3, 4, 5 \}  & \mathbb{Z}_{2}\times\mathbb{Z}_2 \\\hline
      7.3 & 7   & \{ 1, 5, 6 \}, \{ 1, 5, 7 \}, \{ 1, 6, 7 \}, \{ 2, 3, 4 \}, \{ 2, 3, 7 \}, 
      \{ 2, 4, 6 \}, \{ 3, 4, 5 \} & S_{3} \\\hline
      7.4 & 7   & \{1, 4, 7 \}, \{ 1, 5, 6 \}, \{ 1, 6, 7 \}, \{ 2, 3, 5 \}, \{ 2, 3, 7 \}, 
      \{ 2, 4, 6 \}, \{ 3, 4, 5 \}  & \mathbb{Z}_{2} \\\hline
      7.5 & 7   &  \{ 1, 4, 6 \}, \{ 1, 5, 7 \}, \{ 1, 6, 7 \}, \{ 2, 3, 5 \}, \{ 2, 3, 7 \}, 
      \{ 2, 4, 6 \}, \{ 3, 4, 5 \}  & \mathbb{Z}_{2} \\\hline
      7.6 & 7   & \{ 1, 3, 7 \}, \{ 1, 4, 7 \}, \{ 1, 5, 7 \}, \{ 2, 3, 6 \}, \{ 2, 4, 6 \}, 
      \{ 2, 5, 6 \}, \{ 3, 4, 5 \} & D_{4}\times S_{3} \\\hline
      7.7 & 7   & \{ 1, 3, 6 \}, \{ 1, 4, 7 \}, \{ 1, 5, 7 \}, \{ 2, 3, 7 \}, \{ 2, 4, 6 \}, 
      \{ 2, 5, 6 \}, \{ 3, 4, 5 \} & \mathbb{Z}_{2}\times \mathbb{Z}_{2} \times \mathbb{Z}_{2} \\\hline
      7.8 & 7   &  \{ 1, 3, 5 \}, \{ 1, 4, 7 \}, \{ 1, 6, 7 \}, \{ 2, 3, 7 \}, \{ 2, 4, 6 \}, 
      \{ 2, 5, 6 \}, \{ 3, 4, 5 \}  & \mathbb{Z}_{3} \\\hline
       7.9 & 7   &  \{ 1, 4, 5 \}, \{ 1, 5, 7 \}, \{ 1, 6, 7 \}, \{ 2, 3, 4 \}, \{ 2, 3, 6 \}, 
      \{ 2, 6, 7 \}, \{ 3, 4, 5 \}  & D_{7} \\\hline

      8.1 & 8   &   \{ 1, 5, 8 \}, \{ 1, 6, 8 \}, \{ 1, 7, 8 \}, \{ 2, 3, 7 \}, \{ 2, 4, 7 \}, \{ 2, 5, 6 \}, \{ 3, 4, 5 \}, \{ 3, 4, 6 \}  &  \mathbb{Z}_{2}\times \mathbb{Z}_{2} \times \mathbb{Z}_{2}\\\hline

      8.2 & 8   &  \{ 1, 5, 8 \}, \{ 1, 6, 7 \}, \{ 1, 7, 8 \}, \{ 2, 3, 8 \}, \{ 2, 4, 7 \}, \{ 2, 5, 6 \}, \{ 3, 4, 5 \}, \{ 3, 4, 6 \}   & \mathbb{Z}_{2} \\\hline

      8.3 & 8   &  \{ 1, 6, 7 \}, \{ 1, 6, 8 \}, \{ 1, 7, 8 \}, \{ 2, 3, 8 \}, \{ 2, 4, 5 \}, \{ 2, 5, 7 \}, \{ 3, 4, 5 \}, \{ 3, 4, 6 \}   &  \mathbb{Z}_{2}\\\hline
      
      8.4 & 8   &  \{ 1, 5, 8 \}, \{ 1, 6, 8 \}, \{ 1, 7, 8 \}, \{ 2, 3, 7 \}, \{ 2, 4, 6 \}, \{ 2, 5, 7 \}, \{ 3, 4, 5 \}, \{ 3, 4, 6 \}  & \mathbb{Z}_{2} \\\hline

      8.5 & 8   &   \{ 1, 5, 8 \}, \{ 1, 6, 7 \}, \{ 1, 7, 8 \}, \{ 2, 3, 8 \}, \{ 2, 4, 6 \}, \{ 2, 5, 7 \}, \{ 3, 4, 5 \}, \{ 3, 4, 6 \}  &  \{\}\\\hline
      
      8.6 & 8   &  \{ 1, 5, 7 \}, \{ 1, 6, 8 \}, \{ 1, 7, 8 \}, \{ 2, 3, 8 \}, \{ 2, 4, 6 \}, \{ 2, 5, 7 \}, \{ 3, 4, 5 \}, \{ 3, 4, 6 \}  &  \{\}\\\hline
      
      8.7 & 8   &  \{ 1, 5, 8 \}, \{ 1, 6, 7 \}, \{ 1, 6, 8 \}, \{ 2, 3, 8 \}, \{ 2, 4, 7 \}, \{ 2, 5, 7 \}, \{ 3, 4, 5 \}, \{ 3, 4, 6 \}   & \{\} \\\hline
      
      8.8 & 8   &  \{ 1, 5, 6 \}, \{ 1, 6, 8 \}, \{ 1, 7, 8 \}, \{ 2, 3, 8 \}, \{ 2, 4, 7 \}, \{ 2, 5, 7 \}, \{ 3, 4, 5 \}, \{ 3, 4, 6 \}  &  \{\}\\\hline
       
      8.9 & 8   &  \{ 1, 5, 7 \}, \{ 1, 6, 7 \}, \{ 1, 6, 8 \}, \{ 2, 3, 8 \}, \{ 2, 4, 8 \}, \{ 2, 5, 7 \}, \{ 3, 4, 5 \}, \{ 3, 4, 6 \}  &  \mathbb{Z}_{2} \times \mathbb{Z}_{2}\\\hline

      8.10 & 8   &  \{ 1, 5, 6 \}, \{ 1, 6, 7 \}, \{ 1, 7, 8 \}, \{ 2, 3, 8 \}, \{ 2, 4, 8 \}, \{ 2, 5, 7 \}, \{ 3, 4, 5 \}, \{ 3, 4, 6 \}  & \mathbb{Z}_{2} \\\hline

      8.11 & 8   &    \{ 1, 4, 6 \}, \{ 1, 6, 8 \}, \{ 1, 7, 8 \}, \{ 2, 3, 7 \}, \{ 2, 5, 7 \}, \{ 2, 5, 8 \}, \{ 3, 4, 5 \}, \{ 3, 4, 6 \}  & \mathbb{Z}_{2} \\\hline
  
      8.12 & 8   &    \{ 1, 4, 7 \}, \{ 1, 6, 7 \}, \{ 1, 6, 8 \}, \{ 2, 3, 8 \}, \{ 2, 5, 7 \}, \{ 2, 5, 8 \}, \{ 3, 4, 5 \}, \{ 3, 4, 6 \}  & \mathbb{Z}_{2} \\\hline
      
      8.13 & 8   &    \{ 1, 4, 6 \}, \{ 1, 6, 8 \}, \{ 1, 7, 8 \}, \{ 2, 3, 7 \}, \{ 2, 5, 6 \}, \{ 2, 5, 8 \}, \{ 3, 4, 5 \}, \{ 3, 4, 7 \}  & \mathbb{Z}_{2} \\\hline
  
      8.14 & 8   &    \{ 1, 3, 8 \}, \{ 1, 4, 8 \}, \{ 1, 5, 8 \}, \{ 2, 5, 6 \}, \{ 2, 5, 7 \}, \{ 2, 6, 7 \}, \{ 3, 4, 6 \}, \{ 3, 4, 7 \}  & \mathbb{Z}_{2} \times \mathbb{Z}_{2} \times \mathbb{Z}_{2} \\\hline
  
      8.15 & 8   &   \{ 1, 4, 5 \}, \{ 1, 5, 7 \}, \{ 1, 6, 8 \}, \{ 2, 3, 8 \}, \{ 2, 5, 8 \}, \{ 2, 6, 7 \}, \{ 3, 4, 6 \}, \{ 3, 4, 7 \}  &  \{\}\\\hline
      
      8.16 & 8   &     \{ 1, 4, 5 \}, \{ 1, 5, 6 \}, \{ 1, 7, 8 \}, \{ 2, 3, 8 \}, \{ 2, 5, 7 \}, \{ 2, 6, 8 \}, \{ 3, 4, 6 \}, \{ 3, 4, 7 \}  & \mathbb{Z}_{2} \times \mathbb{Z}_{2} \\\hline
      
      8.17 & 8   &       \{ 1, 3, 7 \}, \{ 1, 4, 7 \}, \{ 1, 5, 8 \}, \{ 2, 5, 6 \}, \{ 2, 5, 8 \}, \{ 2, 6, 8 \}, \{ 3, 4, 6 \}, \{ 3, 4, 7 \}  & D_{4} \\\hline
      
      8.18 & 8   &      \{ 1, 3, 7 \}, \{ 1, 4, 6 \}, \{ 1, 5, 8 \}, \{ 2, 5, 7 \}, \{ 2, 5, 8 \}, \{ 2, 6, 8 \}, \{ 3, 4, 6 \}, \{ 3, 4, 7 \}  &  \mathbb{Z}_{2} \\\hline

      8.19 & 8   &        \{ 1, 3, 4 \}, \{ 1, 5, 8 \}, \{ 1, 6, 7 \}, \{ 2, 5, 7 \}, \{ 2, 5, 8 \}, \{ 2, 6, 8 \}, \{ 3, 4, 6 \}, \{ 3, 4, 7 \}  & D_{6} \\\hline
      
      8.20 & 8   &        \{ 1, 3, 4 \}, \{ 1, 5, 6 \}, \{ 1, 7, 8 \}, \{ 2, 5, 7 \}, \{ 2, 5, 8 \}, \{ 2, 6, 8 \}, \{ 3, 4, 6 \}, \{ 3, 4, 7 \}  & \mathbb{Z}_{2} \times \mathbb{Z}_{2} \\\hline
      
      8.21 & 8   &       \{ 1, 3, 4 \}, \{ 1, 5, 7 \}, \{ 1, 5, 8 \}, \{ 2, 5, 8 \}, \{ 2, 6, 7 \}, \{ 2, 6, 8 \}, \{ 3, 4, 6 \}, \{ 3, 4, 7 \}  & \mathbb{Z}_{2} \times \mathbb{Z}_{2} \\\hline
      
      8.22 & 8   &        \{ 1, 4, 6 \}, \{ 1, 5, 7 \}, \{ 1, 5, 8 \}, \{ 2, 3, 8 \}, \{ 2, 5, 7 \}, \{ 2, 6, 7 \}, \{ 3, 4, 6 \}, \{ 3, 4, 8 \}  &  \mathbb{Z}_{4}\\\hline
      
      8.23 & 8   &       \{ 1, 3, 7 \}, \{ 1, 4, 5 \}, \{ 1, 7, 8 \}, \{ 2, 4, 8 \}, \{ 2, 5, 8 \}, \{ 2, 6, 7 \}, \{ 3, 4, 6 \}, \{ 3, 5, 6 \}  & \mathbb{Z}_{2} \times \mathbb{Z}_{2} \\\hline
      
      8.24 & 8   &        \{ 1, 4, 8 \}, \{ 1, 5, 7 \}, \{ 1, 6, 8 \}, \{ 2, 3, 8 \}, \{ 2, 4, 6 \}, \{ 2, 5, 7 \}, \{ 3, 4, 7 \}, \{ 3, 5, 6 \}  &  \mathbb{Z}_{2}\\\hline
      
      8.25 & 8   &       \{ 1, 4, 5 \}, \{ 1, 6, 8 \}, \{ 1, 7, 8 \}, \{ 2, 3, 8 \}, \{ 2, 4, 6 \}, \{ 2, 5, 7 \}, \{ 3, 4, 7 \}, \{ 3, 5, 6 \}  &  D_{4}\\\hline
      
      8.26 & 8   &      \{ 1, 4, 5 \}, \{ 1, 5, 7 \}, \{ 1, 6, 8 \}, \{ 2, 3, 8 \}, \{ 2, 4, 8 \}, \{ 2, 6, 7 \}, \{ 3, 4, 7 \}, \{ 3, 5, 6 \}  &  \mathbb{Z}_{2}\\\hline
      
      8.27 & 8   &    \{ 1, 3, 5 \}, \{ 1, 5, 7 \}, \{ 1, 6, 8 \}, \{ 2, 4, 6 \}, \{ 2, 4, 8 \}, \{ 2, 7, 8 \}, \{ 3, 4, 7 \}, \{ 3, 5, 6 \}   &  \mathbb{Z}_{2} \times \mathbb{Z}_{2}\\\hline
      
      8.28 & 8   &       \{ 1, 2, 6 \}, \{ 1, 4, 8 \}, \{ 1, 6, 8 \}, \{ 2, 5, 6 \}, \{ 2, 5, 7 \}, \{ 3, 4, 7 \}, \{ 3, 4, 8 \}, \{ 3, 5, 7 \}    & D_{8} \\\hline
      
      8.29 & 8   &      \{ 1, 3, 8 \}, \{ 1, 4, 8 \}, \{ 1, 7, 8 \}, \{ 2, 3, 7 \}, \{ 2, 4, 7 \}, \{ 2, 5, 6 \}, \{ 3, 5, 6 \}, \{ 4, 5, 6 \}    &  \mathbb{Z}_{2}\times D_{4}\\\hline
      
      8.30 & 8   &       \{ 1, 3, 7 \}, \{ 1, 4, 7 \}, \{ 1, 6, 8 \}, \{ 2, 3, 8 \}, \{ 2, 4, 8 \}, \{ 2, 5, 7 \}, \{ 3, 5, 6 \}, \{ 4, 5, 6 \}   &  \mathbb{Z}_{2} \times \mathbb{Z}_{2} \times S_{3}\\\hline
      
      8.31 & 8   &       \{ 1, 2, 5 \}, \{ 1, 2, 6 \}, \{ 1, 7, 8 \}, \{ 2, 7, 8 \}, \{ 3, 4, 7 \}, \{ 3, 4, 8 \}, \{ 3, 5, 6 \}, \{ 4, 5, 6 \}  & ((\mathbb{Z}_{2}\times \mathbb{Z}_{2} \times \mathbb{Z}_{2}):\mathbb{Z}_{4}):\mathbb{Z}_{2} \\\hline
  
\end{array}
\end{math}
\vspace{5mm}
\caption{All possible symmetric configurations with $s=3$ and $4\leq n \leq 8$}
\end{center}

\end{table}

\section{Future work}

We finish by giving some insight into future avenues of exploration. 

Firstly, in \cite{peet2020coverings}, the concept of an orbiconfiguration was given. This was a generalization of a quotient space of a configuration of order $1$ under a group action. 

It is clear that once again this can be generalaized to higher orders by simply changing the definition as regards to only one line per pair of points to only one plane per triple of points.

Secondly, the work of this paper can be directly continued to investigate the following:

\begin{enumerate}
    \item Count the number of configurations of various orders
    \item Extend notions such as point-transitive, plane-transitive, and flag-transitive configurations
    \item Consider the equivalent of triangle-free configurations - in the case of order $2$, tetrahedron-free configurations of order $2$
\end{enumerate}

Simply put, all the avenues of research interest in configurations can be very simply extended to configurations of higher orders.

Thirdly, it can be investigated what configurations of order $2$ are such that the points and lines (as given by the faces of index $1$) form a configuration of order $1$, and the lines and planes form a configuration of order $1$. Preliminarily, we define such a configuration of order $2$ as a superconfiguration of order $2$. Note that the tetrahedron example (and indeed and platonic solid) form a superconfiguration, but also Example 5.1.

\bibliographystyle{unsrt}  
\bibliography{references} 

\begin{thebibliography}{1}

\bibitem{grunbaum2009configurations}
Branko Gr{\"u}nbaum.
\newblock {\em Configurations of points and lines}, volume 103.
\newblock American Mathematical Soc., 2009.

\bibitem{glynn2007note}
David~G Glynn.
\newblock A note on nk configurations and theorems in projective space.
\newblock {\em Bulletin of the Australian Mathematical Society}, 76(1):15--31,
  2007.

\bibitem{hatcher2005algebraic}
Allen Hatcher.
\newblock {\em Algebraic topology}.
\newblock 2005.

\bibitem{lee2010introduction}
John Lee.
\newblock {\em Introduction to topological manifolds}, volume 202.
\newblock Springer Science \& Business Media, 2010.

\bibitem{mcmullen_schulte_2002}
Peter McMullen and Egon Schulte.
\newblock {\em Abstract Regular Polytopes}.
\newblock Encyclopedia of Mathematics and its Applications. Cambridge
  University Press, 2002.

\bibitem{harary1969graph}
Frank Harary.
\newblock {\em Graph theory}.
\newblock Narosa Publishing House, 1969.

\bibitem{imrich2008topics}
Wilfried Imrich, Sandi Klavzar, and Douglas~F Rall.
\newblock {\em Topics in graph theory: Graphs and their Cartesian product}.
\newblock CRC Press, 2008.

\bibitem{Peet_Configurations_of_higher_2022}
Benjamin Peet.
\newblock {Configurations of higher orders GAP code}.
\newblock {\em https://github.com/benjaminpeet/configurationsofhigherorders},
  2022.

\bibitem{peet2020coverings}
Benjamin Peet.
\newblock Coverings of configurations, prime configurations, and
  orbiconfigurations.
\newblock {\em Revista Colombiana de Matem{\'a}ticas}, 54(2):141--160, 2020.

\end{thebibliography}

\end{document}